\journal{Bulletin des sciences mathématiques}
\theoremstyle{plain}
\newtheorem{theorem}{Theorem}[section]
\newtheorem{lemma}{Lemma}[section]
\newtheorem*{corollary}{Corollary} 
\theoremstyle{definition}
\newtheorem*{mydef}{Definition}
\theoremstyle{remark}
\newtheorem*{remark}{Remark}
\renewcommand\Re{\operatorname{Re}}
\renewcommand\Im{\operatorname{Im}}
\def\R{\mathbb{R}}
\def\Q{\mathbb{Q}}
\def\Z{\mathbb{Z}}
\def\N{\mathbb{N}}
\def\C{\mathbb{C}}
\def\cA{\mathcal{A}}
\def\cB{\mathcal{B}}
\def\cT{\mathcal{T}}
\def\rT{\mathrm{T}}
\def\ri{\mathrm{i}}
\def\rd{\mathrm{d}}
\def\rg{\mathrm{g}}
\def\bM{\mathbf{M}}
\def\bv{\mathbf{v}}
\def\mod#1{\,({\rm mod\ }#1) }
\def\revd{\overleftarrow{d}\hskip-1pt}
\def\revZ{\overleftarrow{Z}\hskip-1pt}
\def\revGamma{\overleftarrow{\Gamma}\hskip-1pt}
\def\qad{\hskip 5pt}
\begin{document}
\hyphenation{build}

\begin{frontmatter}



\title{B\'ezier curves and the Takagi function}

\author[a]{Lenka Pt\'a\v ckov\'a\corref{cor1}}
\ead[a]{lenka@kam.mff.cuni.cz}
\address[a]{Dept. of Applied Mathematics, Charles University, Ke Karlovu 3, 121 16 Praha 2, Czech Republic}
\cortext[cor1]{Corresponding author (Lenka Pt\'a\v ckov\'a)}

\author[b]{Franco Vivaldi}
\ead[b]{f.vivaldi@maths.qmul.ac.uk}

\address[b]{School of Mathematical Sciences, Queen Mary, University of London, London E1 4NS, UK}

\begin{abstract}
We consider B\'ezier curves with complex parameters, and we determine
explicitly the affine iterated function system (IFS) corresponding to 
the de Casteljau subdivision algorithm, together with the complex 
parametric domain over which such an IFS has a unique global 
connected attractor. For a specific family of complex parameters having
vanishing imaginary part, we prove that the Takagi fractal curve
is the attractor, under suitable scaling.
\end{abstract}

%

\begin{keyword}
Takagi curve 
\sep B\'ezier curve with complex parameter 
\sep de Casteljau algorithm 
\sep Iterated function system
\sep Dynamical system 
\sep Fractal. 


\MSC 65D18 \sep 65D99 \sep 68U05 \sep 37N30 \sep 15B99 

\end{keyword}

\end{frontmatter}

%
%

\section{Introduction}
We consider connections between a subdivision scheme 
of geometric modelling, the de Casteljau algorithm for 
B\'ezier curves, and iterated function systems of fractal theory.
We obtain an explicit representation of this algorithm as a pair of affine 
maps with coefficients in $\Z[t]$, the ring of polynomials in $t$ with 
integer coefficients. Here $t$ is the parameter of the B\'ezier curve,
and fractals are generated by allowing $t$ to assume complex values.
After determining the range of $t$-values for which the IFS has a unique
global connected attractor, we establish rigorously the appearance of 
the Takagi fractal curve in a specific asymptotic regime.

The Takagi curve is the graph of a continuous and nowhere differentiable 
function, the Takagi function, introduced by Teiji Takagi in 1903, and 
subsequently rediscovered several times, e.g., by Hildebrandt in 1933 
or de Rham in 1957 \citep{Lagarias}. 
The Takagi function appears in many areas of mathematics, including analysis, probability 
theory, combinatorics, and number theory ---see the surveys 
\citep{Allaart,Lagarias} and references therein.

In geometric modelling, the recursive construction of fractals has
proved valuable for generating realistic (i.e., typically
non-smooth) forms. This approach originated from the work of Barnsley 
and co-workers on fractal image compression \citep{BarnsleyHurd}, 
using the machinery of iterated function systems; these are dynamical 
systems consisting of collections of contraction mappings 
\citep[section 3.7]{FractalsEverywhere}.
This idea later appeared in the context of subdivision schemes,
such as B\'ezier and B-spline curves, which are widely used in 
geometric modelling, computer graphics, and
computer aided design, due to the simplicity of their construction.

The first such application is due to R.~Goldman \citep{Goldman2004}, 
who presents a constructive procedure that builds an IFS based on the 
de Casteljau subdivision algorithm, and shows that B\'ezier curves are 
attractors of such an IFS.

Schaefer et al.~\citep{SchaeferGoldman2005} extend the idea by showing that 
curves and surfaces generated by several well-known subdivision algorithms 
are also attractors, fixed points of IFSs. 
They demonstrate how any curve generated by an arbitrary stationary subdivision scheme or subdivision surfaces without extraordinary\footnote{An extraordinary vertex is a vertex whose valence $\neq 6$ for triangle meshes or valence $\neq 4$ for quad meshes. Vertex valence is the number of vertices directly adjacent to a vertex.} vertices can be represented by an IFS.
Furthermore, they provide a general paradigm for introducing control points and subdivision rules for arbitrary fractals generated by an IFS consisting of affine transformations, such as the Sierpinski gasket or the Koch curve. Moving the control points of these fractals induces transformations that are affine (but not necessarily conformal).

Tsianos and Goldman \citep{Tsianos2011} study the opposite direction: 
they show how to apply the de Casteljau subdivision algorithm and several standard knot insertion procedures for B-splines to build fractal shapes. 
They do so by allowing the parameters, or knots, to be complex numbers. 
Then they construct IFSs for these extended versions of classical subdivision 
algorithms, where the control points and the parameters are taken from 
the complex plane. They prove that starting with any control polygon, 
the IFS for B\'ezier curves with a given complex parameter converges 
uniformly to a limiting curve.
Furthermore, they show that every fractal in the plane generated by a 
conformal IFS can be reproduced by B\'ezier subdivision in the complex 
plane \citep[Corollary 3.3]{Tsianos2011}. 
They can thus generate fractals with control points; however, 
in contrast to \citep{SchaeferGoldman2005}, adjusting the control points 
induces transformations that are conformal but not necessarily affine. 

Some well-known subdivision schemes can also produce fractal curves and 
surfaces. For example, the four-point interpolating subdivision scheme 
introduced by Dyn et al. \citep{Dyn} is defined using a tension parameter 
$\omega$, and the smoothness of the limit curves depends on this parameter. 
Thus while the limit curve is almost $C^2$-continuous for some
values of $\omega$, for other values one obtains
fractal curves and surfaces, as shown in \citep{fractalSubd}.

Connections between splines and fractals also emerge if one allows the order 
of a polynomial B-spline to be a real or complex number, leading to 
fractional and complex B-splines, respectively ---see \citep{FBU06,Massopust2019} and references therein.
The complexification of the order is also available for the
\textit{exponential} B-splines, defined as convolution products of 
exponential functions, see \citep{Massopust2014, Massopust2019} and 
references therein. 
However, according to \citep{Massopust2019}, neither the classical nor 
extended polynomial and exponential B-splines provide appropriate 
approximations of functions that exhibit self--similar or fractal 
behaviour. In these cases, one needs to resort to fractal interpolation 
and approximation techniques. 
The extension of polynomial B-splines to self--similar or fractal 
functions was presented in \citep{Massopust} (see also
\citep[Section 5]{Massopust2019}).

The structure and main results of this paper are as follows. 
In section \ref{sec:IFS} we briefly review the theory of iterated function 
systems. Alongside we provide some constructs and results on metric spaces.
Our focus is on fractals generated by IFS consisting of affine 
transformations.

In section \ref{sec:IFSforSubdivision}
we provide some background on B\'ezier curves, adopting
the definition of the de Casteljau subdivision 
for curves with complex parameter $t$ and $n$ control points 
given in \citep{SchaeferGoldman2005}. 
We then obtain an explicit representation of the de Casteljau 
scheme as a pair of affine maps on $\Z[t]^n$, and we determine the complex 
$t$-domain over which such an IFS has a unique connected attractor 
(theorem \ref{thm:IFSAttractor}).
This result rests on the key lemma \ref{lma:BezierUpperTriangular},
which provides a novel upper-triangular representation of the de 
Casteljau matrices.

In section \ref{sec:Takagi} we consider the de Casteljau IFS, 
with two control points. For parameters of the type $t=1/2+\ri\beta$ we
associate to each binary code a curve ---parametrised by $\beta$---
that describes the location of the point with that symbolic address 
on the attractor. The derivative of these curves at $\beta=0$ define 
a vector field on the smooth B\'ezier curve, and we show that the
Takagi function gives the amplitude of this field (lemma \ref{lma:v}).
Using this result we then prove that a suitably scaled version of the 
attractor of the IFS in the $\beta\to 0$ limit is the Takagi curve 
(theorem \ref{thm:Takagi}).
In the case of an arbitrary number of control points, the vector field
has several components, and we show that one component is still
given by the Takagi function (with a different scaling).

\section{Iterated function systems}\label{sec:IFS}
We provide some background on iterated function systems 
 ---see \citep{FractalsEverywhere}. For basic dynamical systems
terminology, see also \citep{KatokHasselblatt}.

An iterated function system is a dynamical system whose 
`points' are subsets of a set, $\C^m$ in the present setting. 
The appropriate space for such a system is a \textit{complete metric space},
namely a pair $(X,d)$, where $X$ is a set, and $d$ is a metric on $X$ with respect to which all Cauchy sequences 
in $X$ converge to a limit in $X$.

Further, a mapping $f$ of $(X,d)$ to itself is a
\textit{contraction mapping} if there is a non-negative real 
number $s <1$ such that for all $x,y\in X$
\begin{equation*}
    d(f(x),f(y))\leqslant s \cdot d(x,y) \qquad 0\leqslant s<1.
\end{equation*}
The smallest such number $s$ is called the \textit{contractivity factor} 
for $f$. 

The relevance of contraction mappings in our context is given by
the \textit{Banach fixed point theorem} \citep[Section 5.1]{Kreyszig}.

\begin{theorem}\label{thm:BanachFixedPoint}
A contraction mapping of a complete metric space has a unique fixed point.
Such a fixed point is an attractor, whose basin of attraction is the whole 
space.
\end{theorem}

Fractals are compact sets, and we are interested in the set $\mathcal{H}(X)$
of all compact subsets of a set $X$. If $(X,d)$ is a complete metric space,
one defines a metric $d_H$ on $\mathcal{H}(X)$ ---the \textit{Haussdorf 
metric}--- which turns it into a complete metric space. 

This is done as follows.
Given $x \in X$ and $B \in \mathcal{H}(X)$, we first define the distance
from $x$ to $B$ as $d(x,B)=\min\{d(x,y)\,|\, y \in B \}$.
Then, given $A\in \mathcal{H}(X)$ we define the distance from $A$ to $B$ as
\begin{equation}\label{eq:dist2}
    d(A,B)=\max\{d(x,B)\,|\, x \in A \}.
\end{equation}
Finally, by making the above symmetric, one obtains the Hausdorff metric 
$d_H$ on $\mathcal{H}(X)$:
\begin{equation}\label{eq:HausDistance}
    \rd_H(A,B)=\max \{ d(A,B),d(B,A) \}.
\end{equation}
The metric space $(\mathcal{H}(X),\rd_H)$ is complete 
\citep[Section 7]{FractalsEverywhere}, and we
now define a contraction mapping on $\mathcal{H}(X)$.

\begin{mydef}
Let $(X,d)$ be a complete metric space and let 
$f_k:X\rightarrow X,\; k=1,\dots N$ be a collection of 
contraction mappings with respective contractivity factors $s_k$. 
The dynamical system
\begin{equation}\label{eq:IFS}
F: \mathcal{H}(X) \rightarrow \mathcal{H}(X)\hskip 30pt
F(A)=\bigcup_{k=1}^N f_k(A)
\end{equation}
is called a (hyperbolic) \textit{iterated function system (IFS)} on $X$.
\end{mydef}

Then we have \citep{Hutchinson}:
\begin{theorem}[Hutchinson]
The transformation $F$ given in (\ref{eq:IFS}) is a contraction mapping 
on the complete metric space $(\mathcal{H}(X),\rd_H)$ with contractivity 
factor $s=\max\{s_k\;|\; k=1,\dots,N\}$.
\end{theorem}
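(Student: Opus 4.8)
The plan is to first confirm that $F$ is a genuine self-map of $\mathcal{H}(X)$, and then to reduce the contraction estimate to two elementary properties of the Hausdorff metric: its behaviour under a single contraction, and its behaviour under finite unions. For well-definedness, note that each $f_k$ is continuous (being a contraction), so $f_k(A)$ is compact whenever $A$ is, and a finite union of compact sets is compact; hence $F(A)=\bigcup_k f_k(A)\in\mathcal{H}(X)$ and the claim that $F$ acts on $(\mathcal{H}(X),\rd_H)$ is meaningful.

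The first property I would establish is that a single contraction $f$ with factor $s$ satisfies $\rd_H(f(A),f(B))\leqslant s\,\rd_H(A,B)$. This is immediate from the definitions (\ref{eq:dist2}) and (\ref{eq:HausDistance}): for the one-sided distance,
\[
d(f(A),f(B))=\max_{x\in A}\min_{y\in B}d(f(x),f(y))\leqslant s\max_{x\in A}\min_{y\in B}d(x,y)=s\,d(A,B),
\]
and symmetrically $d(f(B),f(A))\leqslant s\,d(B,A)$, so taking the maximum of the two sides gives the bound.

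The second, and more delicate, property is the union inequality
\[
\rd_H(A\cup B,\,C\cup D)\leqslant\max\{\rd_H(A,C),\rd_H(B,D)\}.
\]
Here the care lies in the asymmetric one-sided distance (\ref{eq:dist2}): enlarging the target set can only decrease the pointwise distance, so $d(x,C\cup D)\leqslant d(x,C)$ for $x\in A$ and $d(x,C\cup D)\leqslant d(x,D)$ for $x\in B$. Splitting the maximum over $A\cup B$ accordingly gives $d(A\cup B,C\cup D)\leqslant\max\{d(A,C),d(B,D)\}$; symmetrising in the two arguments then yields the displayed inequality. This is the step I expect to be the main obstacle, since one must manipulate the one-sided distances separately before symmetrising, rather than working with $\rd_H$ directly.

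With these two properties in hand, the theorem follows by a short computation. Applying the union inequality inductively over the $N$ maps, and then the single-contraction bound to each term,
\[
\rd_H(F(A),F(B))=\rd_H\Bigl(\bigcup_{k=1}^N f_k(A),\bigcup_{k=1}^N f_k(B)\Bigr)\leqslant\max_{k}\rd_H(f_k(A),f_k(B))\leqslant\max_k s_k\,\rd_H(A,B),
\]
which is exactly $s\,\rd_H(A,B)$ with $s=\max\{s_k\mid k=1,\dots,N\}$. Since each $s_k<1$ we have $s<1$, so $F$ is a contraction mapping on $(\mathcal{H}(X),\rd_H)$ with the stated contractivity factor.
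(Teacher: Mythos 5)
Your proof is correct. Note, however, that the paper itself supplies no proof of this statement: it is quoted as Hutchinson's theorem with a citation, so there is no in-paper argument to compare against. What you have written is the standard textbook proof (it is essentially the chain of lemmas in Barnsley's \emph{Fractals Everywhere}, Chapter III): well-definedness of $F$ on $\mathcal{H}(X)$ via continuity and finite unions of compacta, the estimate $\rd_H(f(A),f(B))\leqslant s\,\rd_H(A,B)$ for a single contraction, the union inequality $\rd_H(A\cup B, C\cup D)\leqslant\max\{\rd_H(A,C),\rd_H(B,D)\}$, and the final assembly. You correctly identify the union inequality as the step requiring care with the asymmetric one-sided distance (\ref{eq:dist2}), and your handling of it is right. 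The only pedantic caveat: the paper defines the contractivity factor as the \emph{smallest} constant witnessing the contraction property, whereas your argument (like every standard treatment) only establishes that $\max_k s_k$ is an admissible constant, not that it is optimal; this does not affect anything the theorem is used for.
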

From the above and Banach theorem \ref{thm:BanachFixedPoint}, 
we conclude that $F$ has its unique fixed point $A^{\ast}\in \mathcal{H}(X)$, 
which obeys
\begin{displaymath}
A^{\ast}=F(A^{\ast})=\bigcup_{i=1}^N f_k(A^{\ast}),
\end{displaymath}
and can be obtained as the limit
\begin{equation*}
   A^{\ast}=\lim_{n\rightarrow\infty} F^n(B)\qad\text{for any}\qad B\in \mathcal{H}(X).
\end{equation*}
The above fixed point will be called the \textit{attractor of the IFS}.

Many well-known fractals are generated by affine 
iterated function systems. We consider maps of the form
\begin{equation}\label{eq:AffineTrans}
f: \C^m \rightarrow \C^m\hskip 30pt
    f(\mathbf{x})=\mathbf{A}\mathbf{x} + \mathbf{b},
\end{equation}
where $\mathbf{A}$ is an $m\times m$ regular matrix and $\mathbf{b}$ 
is an $m$-dimensional vector.


\section{IFS for subdivision curves}\label{sec:IFSforSubdivision}

Subdivision curves are defined recursively. The process starts with a 
set of control points $\mathbf{P}$ as input, a refinement scheme is applied 
to them, generating as output a new refined set of control points. The subdivision curve is the limit of this recursive
process.

In \citep{SchaeferGoldman2005, Tsianos2011} a procedure is presented 
to generate an IFS for binary subdivision curves, including uniform 
B-spline curves and B\'ezier curves.
We provide an alternative representation of the IFS of B\'ezier curves,
and prove that in a suitable range of complex parameters this IFS is 
contractive and therefore has a unique fixed point, the 
global attractor. We also establish that in the same parametric 
domain the attractor is connected.

We first review some basic facts concerning B\'ezier curves.

\subsection{B\'ezier Curves and the de Casteljau subdivision algorithm}\label{sub:Casteljau}
B\'ezier curves were popularized in 1962 by the French engineer Pierre B\'ezier, who worked for Renault. They can be expressed explicitly as parametric curves using the Bernstein basis polynomials.
Alternatively, points on a B\'ezier curve can be constructed by the de Casteljau algorithm, which was introduced by Paul de Casteljau already in 1959. He worked for Citro\"en and his work was kept a secret by Citro\"en for a long time \citep{HandbookCAGD}. P. de Casteljau and P. B\'ezier had different approaches and they introduced B\'ezier curves independently, but P. B\'ezier could publish his work first.

The de Casteljau algorithm generalises to polynomial curves of arbitrary degree 
the construction of a parabola by repeated linear interpolation \citep{farin}. 
It is defined as follows:
Let $\textbf{p}_0, \dots , \textbf{p}_n \in \mathbb{R}^3$ be a 
set of control points and $t \in [0,1]$. We set
\begin{equation}\label{eq:castel}\left\{ \begin{array}{rcl}
\mathbf{p}_i^0(t)&=&\mathbf{p}_i\\
\mathbf{p}_i^k(t )&=&(1-t )\mathbf{p}_i^{k-1}(t) + t  \mathbf{p}_{i+1}^{k-1}(t)\quad
\left\{ \begin{array}{rcl}
k&=&1,\dots , n\\
i&=&0,\dots , n-k.
\end{array}\right.\end{array}\right.
\end{equation}
Then $\mathbf{p}_0^n(t)$ is the point with parameter value $t$ on the B\'ezier curve of degree $n$.\\

The de Casteljau algorithm can be viewed as a subdivision scheme, a method for finding new control points $\textbf{l}_0(t), \dots , \textbf{l}_n(t)$ and $\textbf{r}_0(t), \dots , \textbf{r}_n(t)$ from the original control points $\textbf{p}_0, \dots , \textbf{p}_n$. 
These new control points, which represent the original B\'ezier curve 
restricted to the parameter intervals $[0,t]$ and $[t,1]$, respectively,
are computed from the formulae
\begin{equation}\label{eq:CastSubdivision}
\begin{array}{ll}
\textbf{l}_i(t)={\displaystyle \sum_{j=0}^i B_j^i(t) \textbf{p}_j },& i=0,\dots,n,\\
\textbf{r}_i(t)={\displaystyle \sum_{j=i}^n B_{n-j}^{n-i}(t) \textbf{p}_{i+n-j} },& i=0,\dots,n,
\end{array}
\end{equation}
where $B_j^n(t)$ are Bernstein basis polynomials $B^n_j(t) = \binom{n}{j}t^j (1-t)^{n-j}$.

The de Casteljau algorithm (\ref{eq:CastSubdivision}) can be rewritten 
into the following matrix form:
\begin{equation}\label{eq:CastelSubd}\begin{array}{rcl}
\left( \begin{array}{c}
\textbf{l}_0\\
\vdots\\
\textbf{l}_n\\
\end{array} \right)
&=&
\left( \begin{array}{cccc}
B^0_0(t) & 0 & \ldots & 0\\
B^1_0(t) & B_1^1(t) & \ldots & 0\\
\vdots & \vdots & \vdots & \vdots \\
B^n_0(t) & B^n_1(t) & \ldots & B^n_n(t)
\end{array} \right)
\left( \begin{array}{c}
\textbf{p}_0\\
\vdots\\
\textbf{p}_n\\
\end{array} \right)
=\textbf{L}(t)\cdot \textbf{P}
\\{} &{}&  {}\\
\left( \begin{array}{c}
\mathbf{r}_0\\
\vdots\\
\mathbf{r}_n\\
\end{array} \right)
&=&
\left( \begin{array}{cccc}
B^n_0(t) & B^n_1(t) & \ldots & B^n_n(t)\\
0 & B_0^{n-1}(t) & \ldots & B_{n-1}^{n-1}(t) \\
\vdots & \vdots & \vdots & \vdots \\
0 & 0 & \ldots & B_0^0(t)
\end{array} \right)
\left( \begin{array}{c}
\mathbf{p}_0\\
\vdots\\
\mathbf{p}_n\\
\end{array} \right)
=\textbf{R}(t)\cdot \textbf{P} \end{array}
\end{equation}

The matrices $\textbf{L}(t)$, $\textbf{R}(t)$ represent the left and right 
subdivision scheme for B\'ezier curves. Starting with the original control 
points and applying these matrices repeatedly generates a sequence of 
control polygons that converge to the original B\'ezier curve.

The matrices $\textbf{L}(t)$ and $\textbf{R}(t)$ are invertible, 
with eigenvalues $\lambda_i=t^i$ and $\lambda_i=(1-t)^i$, respectively,
for $i=0,\dots,n$. 
The eigenvector corresponding to $\lambda_0=1$ is 
$\mathbf{v}_0=(1,1,\dots,1)^\top$, since both matrices are row-stochastic.

\subsection{IFS for B\'ezier curves with complex parameter}
\label{sub:IFSforComplexBezier}
The de Casteljau algorithm (section \ref{sub:Casteljau}) 
provides a subdivision scheme for B\'ezier curves with real parameter $t$. 
In order to generate fractals, we now extend the algorithm to the case of complex 
parameter $t \in \mathbb{C}$, and then transform the matrices 
$\mathbf{L}(t)$ and $\mathbf{R}(t)$ in $\C^{(n+1)\times (n+1)}$ into an IFS 
consisting of two affine maps $f_0$ and $f_1$ of $\C^{(n+1)}$.

This extension is achieved via a conjugacy that displays the $n$-dimensional
linear part $\mathbf{A}_k$ and translation vector $\mathbf{b}_k$ of 
$f_k$ as sub-matrices ---cf.~equation (\ref{eq:AffineTrans}).
This may be done in four equivalent ways:
\begin{equation}\label{eq:affineLR}
\begin{array}{cccc}
\mathrm{I}&\quad\left( \begin{array}{cc}
 \mathbf{A}_k & \mathbf{b}_k \\
 \mathbf{0} & 1
\end{array} \right)\hskip 60pt
&
\mathrm{II}&\quad\left( \begin{array}{cc}
 \mathbf{A}_k & \mathbf{0} \\
  \mathbf{b}_k & 1
\end{array} \right)
\\\noalign{\vskip 10pt}
\mathrm{III}&\quad\left( \begin{array}{cc}
 1 & \mathbf{b}_k \\
 \mathbf{0} & \mathbf{A}_k
\end{array} \right)\hskip 60pt
&
\mathrm{IV}&\quad\left( \begin{array}{cc}
  1 & \mathbf{0} \\
  \mathbf{b}_k & \mathbf{A}_k
\end{array} \right)
\end{array}
\qquad k=0,1.
\end{equation}
These matrices, which we shall call $\mathbf{M}^{(k)}$, $k=0,1$, 
are distinguished by the location of a row, or column,
of the identity matrix $\mathbf{I}_{n+1}$.
These matrices act on the right (I and IV) or on the left
(II and III), on column or row vectors that feature a 1 
---the homogeneous component---
in the last (I and II) or the first entry (III and IV).

The equivalence of these matrices is achieved by a combination of
a reflection $(i,j) \leftrightarrow (n-i,n-j)$ and/or a transposition. 
The former is performed by the matrix with ones on the
secondary diagonal (which coincides with its own inverse), while 
Theorem 1 of \citep{TausskyZassenhaus} guarantees the existence of 
a (symmetric) transposition similarity.

\medskip
A general procedure for constructing a similarity between
the de Casteljau matrices (\ref{eq:CastelSubd}), or more general
subdivision matrices, and the matrices
$\mathbf{M}^{(k)}$ of type II is developed in \citep{SchaeferGoldman2005}:
\begin{equation}\label{eq:BezierIFS}
  \mathbf{M}^{(0)}= \mathbf{P}^{-1}\;\mathbf{L}(t)\mathbf{P},
      \qquad
  \mathbf{M}^{(1)}=\mathbf{P}^{-1}\;\mathbf{R}(t)\mathbf{P},
\end{equation}
where $\mathbf{P}$ is a square matrix created from any column vector 
whose last two entries differ (for instance, the vector 
of control points $(\mathbf{p}_0,\dots,\mathbf{p}_n)^\top\in\C^{(n+1)}$)
by adding columns from identity matrix and a last column of ones 
corresponding to the homogenous component of the coordinates. 
The matrix $\mathbf{P}$, which performs a change of basis in
$\mathbb{C}^{(n+1)\times (n+1)}$, is invertible by construction.
We illustrate the effect of different choices of $\mathbf{P}$
on the attractor of IFS in Figure \ref{fig:cubicBezier}.

\begin{figure}[ht]
\begin{center}
\includegraphics[width=0.32\textwidth]{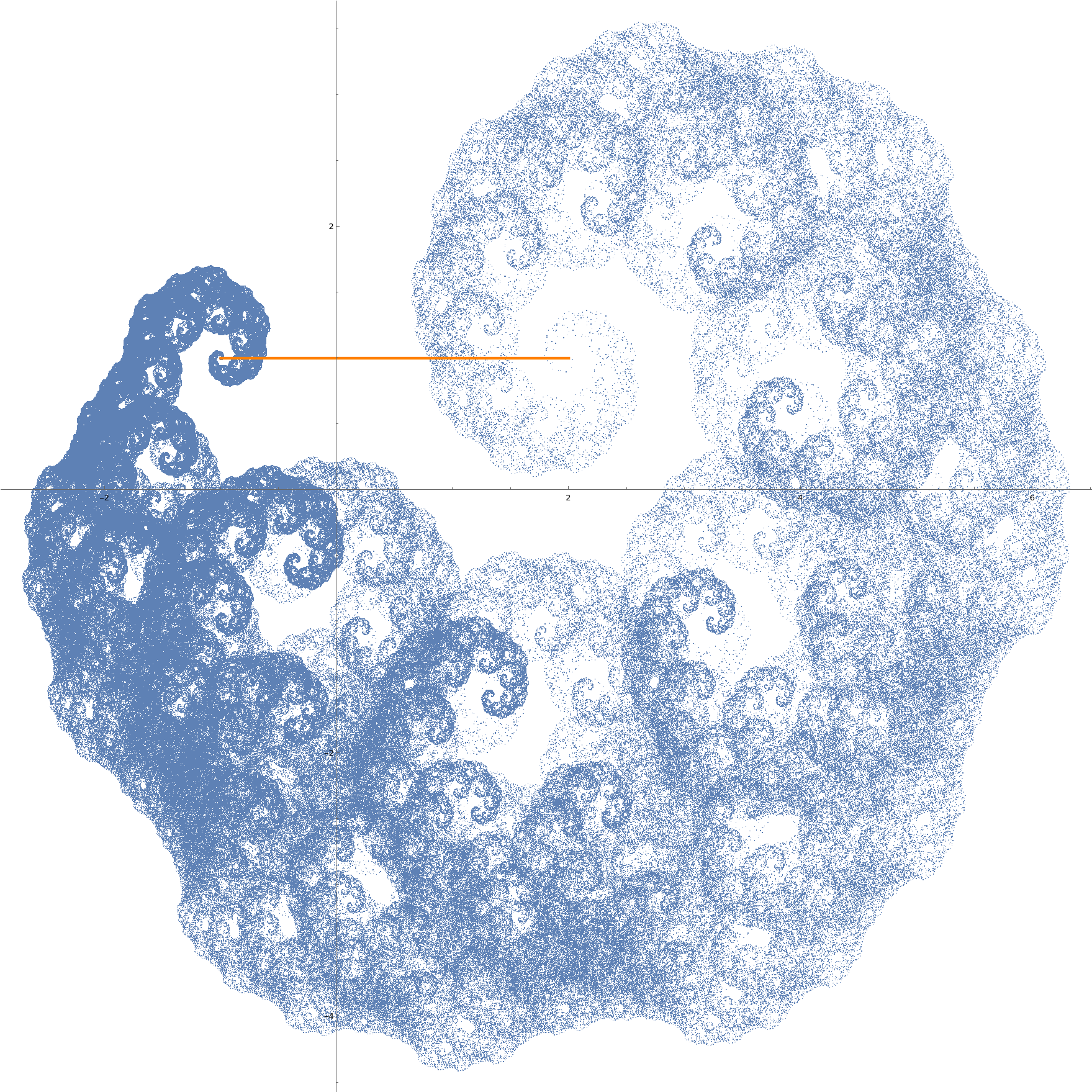}
\includegraphics[width=0.32\textwidth]{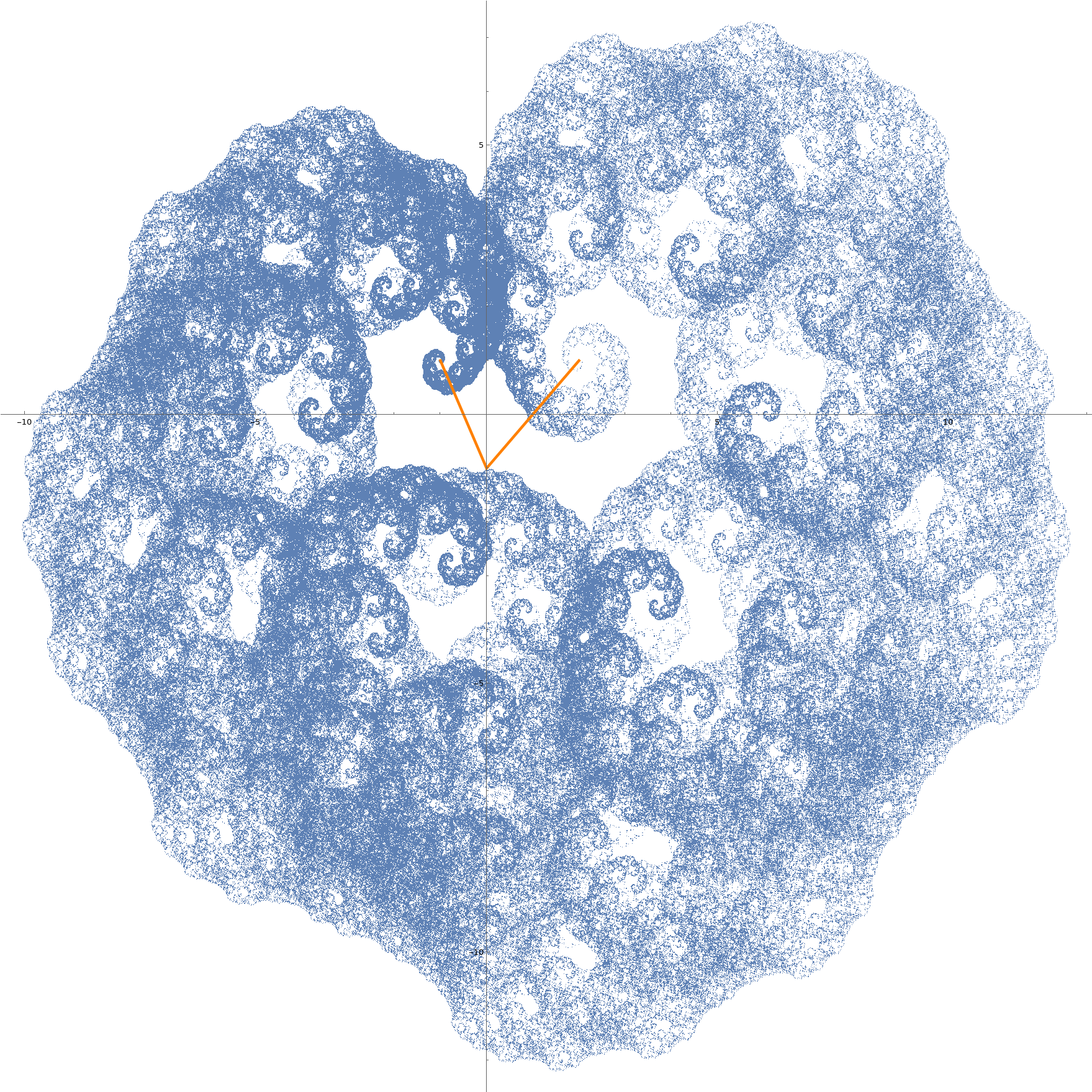}
\includegraphics[width=0.32\textwidth]{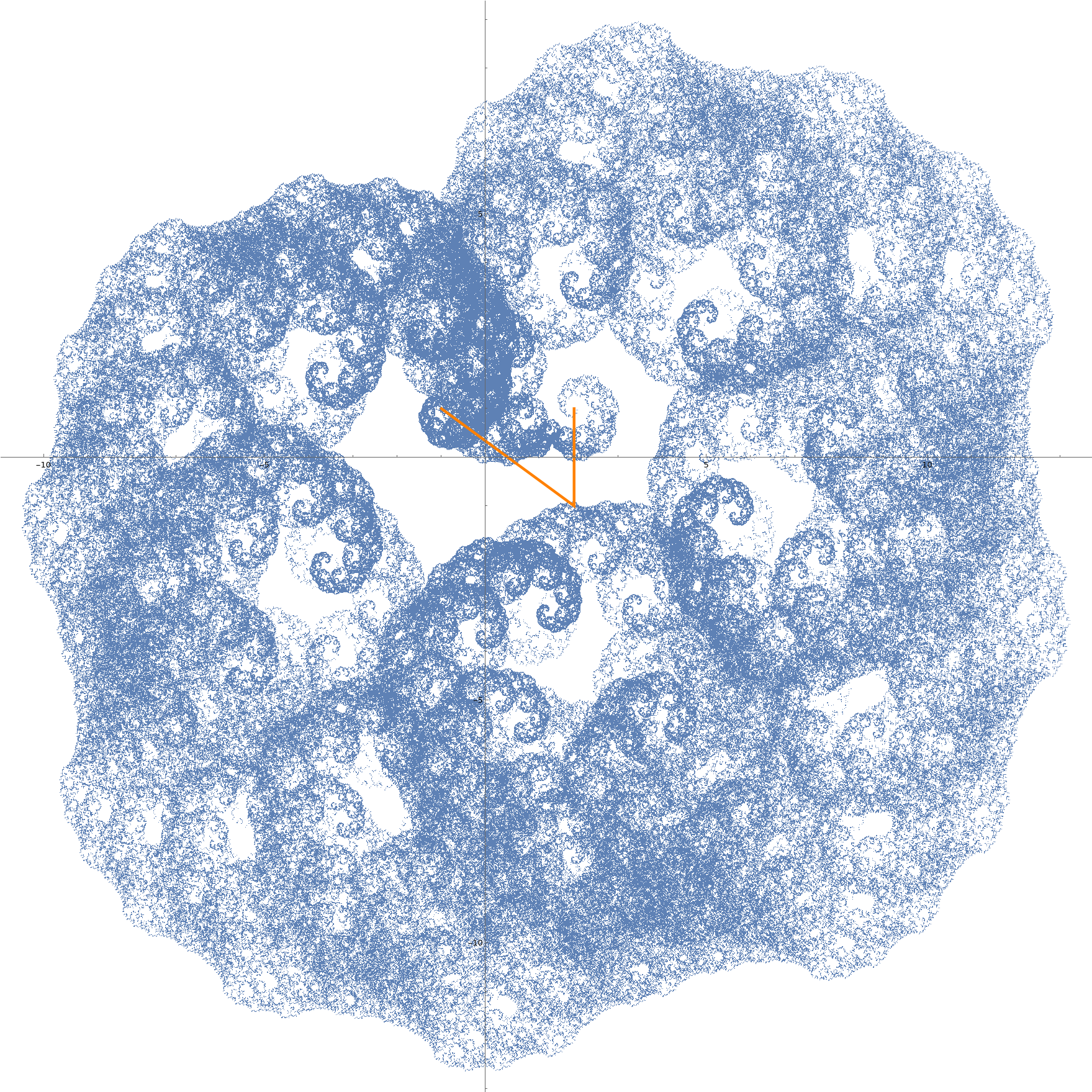}
\caption{Attractors of the IFS for quadratic B\'ezier curve with complex parameter $t = 0.4 -0.55 i$ with control points (plotted as orange polyline) from left to right: 
$(-1 +i, i, 2+i)^\top$, $(-1 +i, -i, 2+i)^\top$, and $(-1 +i, 2-i, 2+i )^\top$. 
The initial point $X\in\C^{1\times (n+1)}$ is an arbitrary point in dimension $n$ in homogenous coordinates. We show the state of the system after 20 iterations. In order to visualize the higher dimensional attractor, we project it to the complex plane. In all cases the fixed point of $f_0$ is the first row of matrix $\mathbf{P}$ of (\ref{eq:BezierIFS}), i.e., $(-1 +i,1,1)$, while the fixed point of $f_1$ is $(2+i,0,1)$ --- the last row of $\mathbf{P}$.}\label{fig:cubicBezier}
\end{center}
\end{figure}

Because the elements in a row of $\mathbf{L}(t)$ (and similarly for 
$\mathbf{R}(t)$) sum up to 1, and the last column of $\mathbf{P}$ is a 
column of ones, the last column of $\mathbf{L}(t)\mathbf{P}$ is also a 
column of ones. 
Further, the last column of $\mathbf{P}^{-1}\mathbf{L}(t)\mathbf{P}$ is the
last column of the identity matrix $\mathbf{I}_{n+1}$ because the last 
column of $\mathbf{P}$ is a column of ones and 
$\mathbf{P}^{-1}\mathbf{P}=\mathbf{I}$.
Thus the matrices $\mathbf{M}^{(k)}$ are both of type II
in (\ref{eq:affineLR}).

\medskip
The great generality of the above construction is accompanied by a
lack of specific information about the submatrices $\mathbf{A}_k$ 
 and the translation vector $\mathbf{b}_k$ in $\mathbf{M}^{(k)}$.
Below (theorem \ref{thm:IFSAttractor}) we establish when the above affine IFS
has a unique global connected attractor.
For this purpose we must show that all infinite products of $\mathbf{A}_0$ 
and $\mathbf{A}_1$ converge to the zero matrix, or equivalently, that their 
\textit{joint spectral radius} is less than one. 
Recall that the joint spectral radius of a set of matrices 
$\mathcal{A}$ is defined as
\[
\rho(\mathcal{A}):=\lim_{n\to\infty} \sup \{||\mathbf{A}_{i_1}\cdots \mathbf{A}_{i_n}||^\frac{1}{n}:
\mathbf{A}_{i}\in \mathcal{A} \},
\]
where $||\cdot||$ is any matrix norm.

In the 90's Daubechies and Lagarias \citep{Daubechies} defined the \textit{generalized spectral radius}, which was proved to be equal to the joint spectral radius for a bounded set of matrices \citep{BergerWang}.
The \textit{generalized spectral radius} is defined as:
\[
\rho(\mathcal{A}):=\lim_{n\to\infty}\sup \{ \rho(\mathbf{A}_{i_1}\dots \mathbf{A}_{i_n})^\frac{1}{n}, \mathbf{A}_{i}\in \mathcal{A} \}.
\]

We shall employ the following lemma:
\begin{lemma}\label{lem:JointSpectralRadius}
If $\mathbf{A}_0,\mathbf{A}_1$ can be simultaneously upper-triangularized, 
i.e., if there 
exists an invertible matrix $\mathbf{S}$ such that $\mathbf{S}^{-1}\mathbf{A}_0\mathbf{S}$ and $\mathbf{S}^{-1}\mathbf{A}_1\mathbf{S}$ 
are both upper-triangular, then $\rho(\mathbf{A}_0,\mathbf{A}_1) = max\{\rho(\mathbf{A}_0), \rho(\mathbf{A}_1)\}$.
\end{lemma}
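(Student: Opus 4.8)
The plan is to prove the lemma by establishing the two inequalities $\rho(\mathbf{A}_0,\mathbf{A}_1) \geqslant \max\{\rho(\mathbf{A}_0),\rho(\mathbf{A}_1)\}$ and $\rho(\mathbf{A}_0,\mathbf{A}_1) \leqslant \max\{\rho(\mathbf{A}_0),\rho(\mathbf{A}_1)\}$ separately, the first of which is immediate and the second of which carries all the content. Since the joint spectral radius is invariant under a simultaneous similarity (replacing $\mathbf{A}_i$ by $\mathbf{S}^{-1}\mathbf{A}_i\mathbf{S}$ changes any product $\mathbf{A}_{i_1}\cdots\mathbf{A}_{i_n}$ by conjugation, which leaves spectral radii unchanged and alters norms only by a bounded factor that vanishes under the $n$-th root), I may assume from the outset that $\mathbf{A}_0$ and $\mathbf{A}_1$ are already upper-triangular. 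The same conjugation-invariance applies to the ordinary spectral radii $\rho(\mathbf{A}_0)$ and $\rho(\mathbf{A}_1)$, so no generality is lost.

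For the lower bound, I would note that the constant sequences give $\rho(\mathbf{A}_i^n)^{1/n} = \rho(\mathbf{A}_i)$ for each fixed $i$, so each $\rho(\mathbf{A}_i)$ is a lower bound for the supremum defining the generalized spectral radius; hence $\rho(\mathbf{A}_0,\mathbf{A}_1) \geqslant \max\{\rho(\mathbf{A}_0),\rho(\mathbf{A}_1)\}$. For the upper bound I would exploit the triangular structure directly. Any product $\mathbf{B} = \mathbf{A}_{i_1}\cdots \mathbf{A}_{i_n}$ of upper-triangular matrices is again upper-triangular, and its $j$-th diagonal entry is the product of the $j$-th diagonal entries of the factors. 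Writing $d_j^{(i)}$ for the $j$-th diagonal entry of $\mathbf{A}_i$, the eigenvalues of $\mathbf{B}$ are exactly the diagonal products $\prod_{\ell=1}^n d_j^{(i_\ell)}$, so each such eigenvalue is bounded in modulus by $\bigl(\max_{i,j}|d_j^{(i)}|\bigr)^n$. Since the spectral radius of $\mathbf{A}_i$ equals $\max_j |d_j^{(i)}|$ for a triangular matrix, we get $\max_{i,j}|d_j^{(i)}| = \max\{\rho(\mathbf{A}_0),\rho(\mathbf{A}_1)\}$, and therefore $\rho(\mathbf{B})^{1/n}\leqslant \max\{\rho(\mathbf{A}_0),\rho(\mathbf{A}_1)\}$ uniformly over all products of length $n$. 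Taking the supremum and then the limit yields the upper bound.

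Combining the two bounds gives the claimed equality. Throughout I would use the generalized-spectral-radius formulation rather than the norm-based joint-spectral-radius formulation, since the diagonal-product argument targets eigenvalues directly; the equality of the two radii for a bounded set of matrices, due to Berger and Wang as cited in the excerpt, then transfers the result to $\rho$ as defined via matrix norms.

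The step I expect to be the main obstacle is handling the conjugation-invariance cleanly for the norm-based definition, since conjugating by $\mathbf{S}$ introduces a factor $\|\mathbf{S}\|\,\|\mathbf{S}^{-1}\|$ into each product norm; I would dispatch this by observing that this factor is independent of $n$, so its $n$-th root tends to $1$ and does not affect the limit. Framing the core argument entirely in terms of the generalized spectral radius, where only eigenvalues appear and conjugation-invariance is exact, sidesteps this subtlety and keeps the proof short.
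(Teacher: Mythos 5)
Your proof is correct and follows essentially the same route as the paper, which simply remarks that the result ``follows immediately from examination of the diagonal entries of the product of two upper-triangular matrices'' and defers the details to Lemma 4.7 of Heil--Colella. You have merely written out in full what the paper leaves to the citation: the trivial lower bound, the diagonal-product upper bound for the generalized spectral radius, and the Berger--Wang equivalence (which the paper also invokes) to transfer the conclusion to the norm-based joint spectral radius.
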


\begin{proof}
The result follows immediately from examination of the diagonal entries of the product 
of two upper-triangular matrices. See \citep[Lemma 4.7]{HeilColella}.
\end{proof}

\begin{remark}
In \citep{BarnesleyVince} it is shown that a necessary and sufficient 
condition for a compact affine IFS in $\R^n$ to have a unique attractor 
is that $\rho(\mathcal{A}) <1$, where $\mathcal{A}$ is the set of matrices
corresponding to the linear part of the set of affine transformations
the IFS consists of.
This statement can be extended verbatim to our case of compact affine 
IFS in $\C^n$. 
\end{remark}

The next result, which is crucial to our analysis, exhibits
a conjugacy that transforms $\mathbf{L}(t)$ into a diagonal matrix 
and $\mathbf{R}(t)$ into an upper-triangular matrix, both of type
III in (\ref{eq:affineLR}); these matrices are determined explicitly.

\begin{lemma}\label{lma:BezierUpperTriangular}
For any $n\geqslant 0$, let $\mathbf{S}$ be the matrix of eigenvectors 
of the $(n+1)\times (n+1)$ matrix $\mathbf{L}(t)$. 
Then $\mathbf{S}$ is invertible, and
\begin{equation}\label{eq:BezierUpperTriangular}
\begin{array}{rcl}
\displaystyle (\mathbf{S}^{-1}\mathbf{L}(t)\mathbf{S})_{i,j}
&=& \displaystyle \binom{0}{i-j}t^i\\
\noalign{\vskip 10pt}
\displaystyle (\mathbf{S}^{-1}\mathbf{R}(t)\mathbf{S})_{i,j}
   &=&\displaystyle \binom{n-i}{n-j}t^{j-i}(1-t)^i
\end{array} \hskip 30pt i,j=0,\ldots,n.
\end{equation}
\end{lemma}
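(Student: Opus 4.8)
The plan is to produce the conjugating matrix $\mathbf{S}$ explicitly and to observe that it is, remarkably, independent of $t$. First I would guess that the $j$-th eigenvector of the lower-triangular matrix $\mathbf{L}(t)$, whose entries are $\mathbf{L}(t)_{i,k}=\binom{i}{k}t^k(1-t)^{i-k}$, has components $\binom{i}{j}$; that is, that $\mathbf{S}$ is the lower-triangular Pascal matrix $\mathbf{S}_{i,j}=\binom{i}{j}$. To verify this I would compute $\sum_{k}\mathbf{L}(t)_{i,k}\binom{k}{j}$, apply the subset-of-a-subset identity $\binom{i}{k}\binom{k}{j}=\binom{i}{j}\binom{i-j}{k-j}$, and collapse the remaining sum by the binomial theorem to $\binom{i}{j}t^j\bigl(t+(1-t)\bigr)^{i-j}=t^j\binom{i}{j}$. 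Thus the $j$-th column is an eigenvector with eigenvalue $t^j$, which establishes the first line of (\ref{eq:BezierUpperTriangular}) (the symbol $\binom{0}{i-j}$ being just the Kronecker delta $\delta_{ij}$) and shows that $\mathbf{S}$ diagonalizes $\mathbf{L}(t)$ for \emph{every} $t$. Invertibility for all $n$ is then automatic, since a unit lower-triangular matrix is invertible; explicitly $(\mathbf{S}^{-1})_{i,j}=(-1)^{i-j}\binom{i}{j}$.

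For the second line I would exploit the reflection symmetry of the de Casteljau matrices. Writing $\mathbf{J}$ for the reversal matrix $\mathbf{J}_{i,j}=\delta_{i,\,n-j}$ (ones on the secondary diagonal, so $\mathbf{J}^2=\mathbf{I}$), a one-line entrywise check using $\binom{n-i}{n-j}=\binom{n-i}{j-i}$ gives $\mathbf{R}(t)=\mathbf{J}\,\mathbf{L}(1-t)\,\mathbf{J}$. Since $\mathbf{S}$ does not depend on $t$, it diagonalizes $\mathbf{L}(1-t)$ as well, with $\mathbf{S}^{-1}\mathbf{L}(1-t)\mathbf{S}=\operatorname{diag}\bigl((1-t)^0,\dots,(1-t)^n\bigr)$. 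Inserting $\mathbf{I}=\mathbf{S}\mathbf{S}^{-1}$ around the central factor then reduces the target to $\mathbf{S}^{-1}\mathbf{R}(t)\mathbf{S}=\mathbf{K}\,\operatorname{diag}\bigl((1-t)^l\bigr)\,\mathbf{K}$, where $\mathbf{K}:=\mathbf{S}^{-1}\mathbf{J}\mathbf{S}$.

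It remains to compute $\mathbf{K}$ and evaluate the resulting sum. I would obtain $\mathbf{K}$ in closed form from $K_{i,j}=\sum_{k}(-1)^{i-k}\binom{i}{k}\binom{n-k}{j}$ by reading off the coefficient of $x^j$ in $(1+x)^n\sum_k\binom{i}{k}(-1)^{i-k}(1+x)^{-k}=(1+x)^n\bigl(-x/(1+x)\bigr)^i$, which yields $K_{i,j}=(-1)^i\binom{n-i}{j-i}$ (in particular $\mathbf{K}$ is upper-triangular). Substituting into $\mathbf{K}\operatorname{diag}\mathbf{K}$ gives the single sum $(-1)^i\sum_{l=i}^{j}(-1)^l\binom{n-i}{l-i}\binom{n-l}{j-l}(1-t)^l$; the trinomial-revision identity $\binom{n-i}{l-i}\binom{n-l}{j-l}=\binom{n-i}{j-i}\binom{j-i}{l-i}$ pulls out the factor $\binom{n-i}{j-i}$, and the binomial theorem collapses what remains to $(1-t)^i\bigl(1-(1-t)\bigr)^{j-i}=t^{j-i}(1-t)^i$. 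This is exactly the second line of (\ref{eq:BezierUpperTriangular}).

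The conceptual crux — and the step I expect to be the main obstacle — is recognizing the $t$-independent Pascal eigenbasis together with the reflection $\mathbf{R}(t)=\mathbf{J}\,\mathbf{L}(1-t)\,\mathbf{J}$; these two observations are what make the statement hold uniformly in $n$ and for every $t$, with no genericity or distinct-eigenvalue hypothesis. Once they are in place the remainder is routine binomial bookkeeping, and the only points requiring care are the generating-function evaluation of $\mathbf{K}$ and the correct tracking of summation ranges and signs. No analytic subtleties arise, since every identity used is a polynomial identity in $t$.
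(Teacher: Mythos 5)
Your proposal is correct, and while the first half coincides with the paper's argument, the second half takes a genuinely different and shorter route. The diagonalization of $\mathbf{L}(t)$ is identical in both: the Pascal matrix $\mathbf{S}_{i,j}=\binom{i}{j}$ is verified to be the ($t$-independent) eigenvector matrix via $\binom{i}{k}\binom{k}{j}=\binom{i}{j}\binom{i-j}{k-j}$ and the binomial theorem, with inverse $(-1)^{i+j}\binom{i}{j}$ (your sign $(-1)^{i-j}$ is the same thing). For $\mathbf{S}^{-1}\mathbf{R}(t)\mathbf{S}$, however, the paper expands the triple product into an explicit double sum $T^{(n)}_{i,j}$ and proves it equals the target $\binom{n-i}{n-j}t^{j-i}(1-t)^i$ by a double induction: both arrays satisfy the cross-degree recursion $T^{(n+1)}_{i+1,j+1}+T^{(n+1)}_{i,j+1}=T^{(n)}_{i,j+1}+T^{(n)}_{i,j}$ and agree on the first row and column. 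You instead exploit the reflection symmetry $\mathbf{R}(t)=\mathbf{J}\,\mathbf{L}(1-t)\,\mathbf{J}$ (which is correct, and which the paper only invokes informally when discussing the equivalence of the four block forms, not in this proof), reducing everything to the closed form $\mathbf{K}=\mathbf{S}^{-1}\mathbf{J}\mathbf{S}$ with $K_{i,j}=(-1)^i\binom{n-i}{j-i}$ and a single binomial sum collapsed by trinomial revision; I checked the generating-function evaluation of $\mathbf{K}$ and the final sum, and both are right. Your route is more conceptual and avoids the induction over the degree $n$ entirely, at the cost of not producing the recursion identities that the paper records as a corollary of its method; conversely, the paper's proof is more mechanical but yields those by-product identities for free. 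One cosmetic point: the lemma fixes $\mathbf{S}$ only as ``the matrix of eigenvectors,'' so like the paper you are implicitly choosing the normalization and ordering in which the $j$-th column is $\bigl(\binom{i}{j}\bigr)_i$ with eigenvalue $t^j$; it is worth saying so explicitly.
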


\begin{proof}
As noted above [see remarks after eq.~(\ref{eq:CastelSubd})], 
the $(n+1)$-dimensional matrix $\mathbf{L}(t)$ over 
$\Z[t]$ has the $(n+1)$ distinct eigenvalues $t^i$, $i=0,\ldots,n$.
Thus $\mathbf{L}(t)$ is diagonalizable via the similarity $\mathbf{S}$, and 
considering that $\binom{0}{i-j}=\delta_{i,j}$, the Kronecker delta,
the first formula in (\ref{eq:BezierUpperTriangular}) is proved.

Next we show that $\mathbf{S}_{i,j}=\binom{i}{j}$, by computing the
eigenvectors of $\mathbf{L}(t)$.
We shall use the identity \citep[p. 174]{GKP}
\begin{equation}\label{eq:BinomialIdentity}
\binom{i}{k}\binom{k}{j}=\binom{i}{j}\binom{i-j}{k-j}\qquad i,j,k\in\Z.
\end{equation}
Keeping in mind the expression for $\mathbf{L}(t)$ in (\ref{eq:CastelSubd}), 
and that $\mathbf{L}(t)_{i,k}=0$ for $k<0$ and $k>n$,
we compute the $i$-th component of the $j$-th eigenvector by summing over 
all integers $k$. We obtain
\begin{eqnarray*}
\sum_k B^i_k\binom{k}{j}&=&\sum_k\binom{i}{k}\binom{k}{j}t^k(1-t)^{i-k}\\
&=&\binom{i}{j}\sum_k\binom{i-j}{k-j}t^{(k-j)+j}(1-t)^{(i-j)-(k-j)}\\
&=&t^j\binom{i}{j}\sum_m\binom{n}{m}t^m(1-t)^{n-m}\\
&=&t^j\binom{i}{j}(t+1-t)^n=t^j\binom{i}{j},
\end{eqnarray*}
where the summation variable $m=k-j$ is unrestricted because $k$ is.
This computation shows that the $j$th column vector of $\mathbf{S}$ has components
$\binom{i}{j}, i=0,\ldots,n$, as desired.

\medskip
Next we show that $\mathbf{S}^{-1}_{i,j}=(-1)^{i+j}\binom{i}{j}$, i.e., 
$\mathbf{S}^{-1} \mathbf{S} = \mathbf{I}$. 
Again, Using (\ref{eq:BinomialIdentity}) and summing over $\Z$, we find:
\[
 \big(\mathbf{S}^{-1} \mathbf{S}\big)_{i,j} 
     = \sum_k(-1)^{i+k}\binom{i}{k}\binom{k}{j}
 = \binom{i}{j}(-1)^i\sum_k (-1)^{k}\binom{i-j}{k-j}.
\]
If $i<j$ then $\binom{i}{j}=0$, and the rightmost expression vanishes.
The same happens if $j<i$, because in this case, from 
the binomial theorem, we have 
$$
\sum_k(-1)^k\binom{i-j}{k-j}=(-1)^j\sum_m\binom{i-j}{m}(-1)^m=(-1)^j(-1+1)^{i-j}=0.
$$
Finally, for $i=j$ the only 
non-zero term in the sum corresponds to $k=i$, giving 
$\big(\mathbf{S}^{-1} \mathbf{S}\big)_{i,i}=\binom{i}{i}(-1)^{2i}=1$, as desired.

\medskip
We have shown that the matrix product 
$\mathbf{S}^{-1}\mathbf{L}(t) \mathbf{S}$ is diagonal.
It remains to show that 
$\mathbf{S}^{-1}\mathbf{R}(t) \mathbf{S}$ is upper-triangular. 
From the above and (\ref{eq:CastelSubd}), we find:\\

\noindent
\scalebox{0.81}{
$
\mathbf{S}^{-1}\mathbf{R}(t) \mathbf{S} =
\begin{pmatrix}
 1          & 0                         & \ldots & 0 \\
 (-1)^1     & (-1)^{1+1} \binom{1}{1}   & \ldots & 0 \\
 \vdots     & \vdots                    & \vdots   & 0 \\
 (-1)^{n} &(-1)^{n+1}\binom{n}{1} & \ldots & (-1)^{n+n}\binom{n}{n} \\
\end{pmatrix}
\begin{pmatrix}
B^n_0(t) & B^n_1(t) & \ldots & B^n_n(t)\\
0 & B_0^{n-1}(t) & \ldots & B_{n-1}^{n-1}(t) \\
\vdots & \vdots & \vdots & \vdots \\
0 & 0 & \ldots & B_0^0(t)\\
\end{pmatrix}
\begin{pmatrix}
 1      & 0             & \ldots    & 0 \\
 1      & \binom{1}{1}  & \ldots    & 0  \\
 \vdots & \vdots        & \vdots    & 0 \\
 1      & \binom{n}{1}  & \ldots    & \binom{n}{n} \\
\end{pmatrix}
$.
}
\\

The explicit expression for the $(i,j)$-component $T^{(n)}_{i,j}$ of 
the matrix $\mathbf{S}^{-1}\mathbf{R}(t)\mathbf{S}$ is thus given by
\begin{eqnarray}\label{eq:T}
T^{(n)}_{i,j}
&=& \sum_{l=0}^{n} \sum_{k=0}^{n} (-1)^{i+l} \binom{i}{l} 
  B^{n-l}_{k-l}(t)\binom{k}{j}\nonumber\\
&=& \sum_{l} \sum_{k} (-1)^{i+l}\binom{i}{l}\binom{n-l}{k-l}
\binom{k}{j} t^{k-l} (1-t)^{n-k},
\end{eqnarray}
where the summation ranges have been extended to $\Z$ because the
binomial coefficients provide the stated restrictions.
The above expression is a polynomial in $\Z[t]$.

Let us consider the sequence of matrices over $\Z[t]$:
\begin{equation}\label{eq:hatT}
\hat T^{(n)}_{i,j}=\binom{n-i}{n-j}t^{j-i}(1-t)^i\qquad 
n\geqslant 0,\quad i,j=0,\ldots,n.
\end{equation}
We have $\hat T^{(0)}=(1)$, and for all $n>0$, $\hat T^{(n)}$ is upper triangular, 
with diagonal elements $\hat T^{(n)}_{i,i}=(1-t)^i$ (the eigenvalues of 
$\mathbf{R}(t)$).

We begin to show that the matrices $\hat T^{(n)}$ satisfy the identity
\begin{equation}\label{eq:Tidentity}
\hat T^{(n+1)}_{i+1,j+1}+\hat T^{(n+1)}_{i,j+1}
  = \hat T^{(n)}_{i,j+1}+\hat T^{(n)}_{i,j}\qquad n\geqslant 0,\quad i,j=0,\ldots,n.
\end{equation}
Here we have included the value $j=n$ because from (\ref{eq:hatT}) we have
$T^{(n)}_{i,n+1}=0$ for all $i$ (the lower index of the binomial coefficient is
negative), even though this expression no longer represents a matrix element.

The left-hand side of (\ref{eq:Tidentity}) is given by
\begin{eqnarray*}
\hat T^{(n+1)}_{i+1,j+1}+\hat T^{(n+1)}_{i,j+1}
&=&t^{j-i}(1-t)^i\left[\binom{n-i}{n-j}(1-t)+\binom{n+1-i}{n-j}t\right]\\
&=&t^{j-i}(1-t)^i\left[\binom{n-i}{n-j}+\binom{n-i}{n-j-1}t\right],
\end{eqnarray*}
where we have used the binomial formula \citep[p. 174]{GKP}
\begin{equation}\label{eq:BinomialRecursion}
\binom{n}{k}=\binom{n-1}{k}+\binom{n-1}{k-1}\qquad n,k\in\Z.
\end{equation}
For the right-hand side of (\ref{eq:Tidentity}), we have
\begin{eqnarray*}
\hat T^{(n)}_{i,j+1}+\hat T^{(n)}_{i,j}
&=&t^{j-i}(1-t)^i\left[\binom{n-i}{n-j-1}t+\binom{n-i}{n-j}\right],
\end{eqnarray*}
which establishes (\ref{eq:Tidentity}).

Next we show 
---by double induction on $n$ and $i$---
that the entries of $\hat T^{(n)}$ are uniquely determined 
by its first row $(i=0$) and column $(j=0)$.
If $n=0$ this is trivially true, so assume that for some $n\geqslant0$,
the matrix $\hat T^{(n)}$ is known, and the first row and columns of
$\hat T^{(n+1)}$ are also known. Then, from (\ref{eq:Tidentity}) we have
that for all $j$ the sequence of equation
$$
\hat T^{(n+1)}_{i+1,j+1}=-\hat T^{(n+1)}_{i,j+1}
  + \hat T^{(n)}_{i,j+1}+\hat T^{(n)}_{i,j},\qquad i=0,\ldots,n
$$
may be solved recursively from the knowledge of the first term 
$\hat T^{(n+1)}_{0,j+1}$.
One verifies that the above procedure accounts for all the missing 
entries of $\hat T^{(n+1)}$. 
[Recall the remark following (\ref{eq:Tidentity}).]
\medskip

We shall now prove that $T^{(n)}_{i,j}$ in (\ref{eq:T}) is equal
to $\hat T^{(n)}_{i,j}$,
by showing that $T^{(0)}=\hat T^{(0)}$, and that for $n>0$ 
the matrices $T^{(n)}$ and $\hat T^{(n)}$ share the first row 
and column, and satisfy the same recursion formula (\ref{eq:Tidentity}).

We begin by computing the first row of $T^{(n)}$.
For $i=0$, the term $\binom{i}{l}$ in (\ref{eq:T})
forces $l=0$, and hence using (\ref{eq:BinomialIdentity}) and the
binomial theorem, we find
\begin{eqnarray}\label{eq:Trow}
T^{(n)}_{0,j}
&=& \sum_{k}\binom{n}{k}\binom{k}{j} t^{k} (1-t)^{n-k}\nonumber\\
&=& \binom{n}{j} \sum_{k}\binom{n-j}{k-j} t^{k+j-j} (1-t)^{n-k+j-j}\nonumber\\
&=& \binom{n}{j}t^j \sum_{m}\binom{n-j}{m} t^{m} (1-t)^{n-j-m}\nonumber\\
&=&\binom{n}{j}t^j,\qquad n\geqslant0,\quad i=0,\ldots,n,
\end{eqnarray}
which is (\ref{eq:hatT}) for $i=0$, as desired. 
This also shows that $T^{(0)}=(1)=\hat T^{(0)}$.

Next we compute the first column of $T^{(n)}$.
For $j=0$, the binomial coefficient in 
(\ref{eq:hatT}) is zero unless also $i=0$, in which case it is
equal to 1. We find
\begin{eqnarray}\label{eq:Tcolumn}
T^{(n)}_{i,0}
&=& \sum_{l}(-1)^{i+l}\binom{i}{l}
   \sum_k\binom{n-l}{k-l} t^{k-l} (1-t)^{n-l+(k-l)}\nonumber \\
&=& (-1)^i\sum_{l}(-1)^{l}\binom{i}{l}=\delta_{i,0},
\qquad n\geqslant 0,\quad i=0,\ldots,n.
\end{eqnarray}
Indeed for $i>0$ the sum is zero, from the binomial theorem, 
while for $i=0$, we have $l=0$, hence $T^{n}_{0,0}=1$ (as above),
which again agrees with (\ref{eq:hatT}) for $j=0$.

It remains to verify that $T$ satisfy (\ref{eq:Tidentity}).
Using (\ref{eq:BinomialIdentity}) twice, we find
\begin{eqnarray*}
T^{(n+1)}_{i+1,j+1}&=&\sum_l(-1)^{i+1+l}\binom{i+1}{l}
   \sum_k\binom{n+1-l}{k-l}\binom{k}{j+1}t^{k-l}(1-t)^{n+1-k}\\
&=&-\sum_l(-1)^{i+l}\binom{i}{l}\sum_k\binom{n+1-l}{k-l}
       \binom{k}{j+1} t^{k-l}(1-t)^{n+1-k}\\
&&+\sum_l(-1)^{i+(l-1)}\binom{i}{l-1}\sum_k\binom{n-(l-1)}{k-1-(l-1)}
       \left[\binom{k-1}{j+1}+\binom{k-1}{j}\right] \\
&&\hskip 50pt   \times\, t^{k-l}(1-t)^{n-(k-1)}\\
&=&-T^{(n+1)}_{i,j+1} + T^{(n)}_{i,j+1} + T^{(n)}_{i,j}.
\end{eqnarray*}
This establishes the identity
$$
T^{(n+1)}_{i+1,j+1}+T^{(n+1)}_{i,j+1}
  = T^{(n)}_{i,j+1}+T^{(n)}_{i,j},
$$
as desired. We remark that this recurrence holds also for $j=n$, 
because in (\ref{eq:T}) we have $\binom{k}{n+1}=0$ for all $k$ 
for which $\binom{i}{l}\binom{n-l}{n-k}\not=0$, as easily verified. 
Hence $T^{(n)}_{i,n+1}=0=\hat T^{(n)}_{i,n+1}$.

We have shown that for all $n\geqslant 0$, the matrices $T^{(n)}$ 
and $\hat T^{(n)}$ have the same first row and column, and their entries 
satisfy the same recursion formula. 
Hence $T^{(n)}=\hat T^{(n)}$, for all $n$.

The proof of the lemma is complete.
\end{proof}

From the proof of lemma \ref{lma:BezierUpperTriangular} we obtain ---as a bonus--- the following identities 
for the double sums $T^{(n)}$, whose validity for $\hat T^{(n)}$ 
in (\ref{eq:hatT}) is verified at once. 
\begin{corollary}
Let $T^{(n)}_{i,j}$ be the sum (\ref{eq:T}).
Then, for $n\geqslant 0$ and $j=0,\ldots,n$:
\begin{equation*}\label{eq:Identities}
\begin{array}{rcll}
T^{(n)}_{i+1,j+1}&=&(1-t)\;T^{(n)}_{i,j}-t\;T^{(n)}_{i+1,j}
&\quad i=0,\ldots,n-1\\
\noalign{\vskip 5pt}
T^{(n+1)}_{j+1,j+1}&=&(1-t)\;T^{(n)}_{i,j}
&\quad  i=0,\ldots,n.
\end{array}
\end{equation*}
\end{corollary}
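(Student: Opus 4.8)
The plan is to reduce both identities to elementary manipulations of the closed form obtained in Lemma~\ref{lma:BezierUpperTriangular}. There we proved that $T^{(n)}_{i,j}=\hat T^{(n)}_{i,j}=\binom{n-i}{n-j}t^{j-i}(1-t)^i$, so every statement about the double sum $T^{(n)}$ in (\ref{eq:T}) is really a statement about this explicit product of a binomial coefficient with powers of $t$ and $1-t$. Consequently I would not touch the double sum at all; I would simply substitute (\ref{eq:hatT}) into each identity and verify it as an algebraic equality in $\Z[t]$.

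For the first identity I would substitute the closed form on both sides and extract the common monomial factor $t^{\,j-i}(1-t)^{i+1}$. On the left this leaves $\binom{n-i-1}{n-j-1}$; on the right it leaves $\binom{n-i}{n-j}-\binom{n-i-1}{n-j}$. The identity then reduces to Pascal's recursion (\ref{eq:BinomialRecursion}) in the form $\binom{n-i}{n-j}=\binom{n-i-1}{n-j}+\binom{n-i-1}{n-j-1}$, which is exactly the remaining equality. This is a two-line computation once the powers are matched; the only point demanding attention is the exponent bookkeeping, namely that the prefactors $(1-t)$ and $t$ are precisely what bring $\hat T^{(n)}_{i,j}$ and $\hat T^{(n)}_{i+1,j}$ to the common monomial $t^{\,j-i}(1-t)^{i+1}$.

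The second identity I read as $T^{(n+1)}_{i+1,j+1}=(1-t)\,T^{(n)}_{i,j}$ for $i=0,\ldots,n$; the printed subscript $j+1,j+1$ would make the right-hand side depend on $i$ while the left-hand side does not, so it must be $i+1,j+1$. For this one the verification is even more direct: substituting (\ref{eq:hatT}) gives, on the left, $\hat T^{(n+1)}_{i+1,j+1}=\binom{n-i}{n-j}t^{\,j-i}(1-t)^{i+1}$, since the shift $n\mapsto n+1$, $i\mapsto i+1$, $j\mapsto j+1$ leaves the binomial coefficient $\binom{(n+1)-(i+1)}{(n+1)-(j+1)}=\binom{n-i}{n-j}$ unchanged and raises the power of $1-t$ by one; the right-hand side $(1-t)\binom{n-i}{n-j}t^{\,j-i}(1-t)^i$ is manifestly the same expression, and no binomial recursion is even needed.

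The proof therefore carries no real obstacle beyond index discipline, since the substance is entirely contained in Lemma~\ref{lma:BezierUpperTriangular}. The one point worth a moment's care is the boundary behaviour of the index ranges: for $i$ at the top of its range, or for $j=n$, some of the binomial coefficients appearing above vanish (their lower index becoming negative or exceeding the upper index), and I would confirm that in each such case both sides reduce to $0$ consistently, so that the identities hold verbatim on the full ranges $i=0,\ldots,n-1$ (respectively $i=0,\ldots,n$) and $j=0,\ldots,n$, rather than only in the generic interior.
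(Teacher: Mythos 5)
Your proof is correct and takes exactly the route the paper intends: the corollary is presented as an immediate consequence of Lemma~\ref{lma:BezierUpperTriangular}, whose identity $T^{(n)}=\hat T^{(n)}$ reduces both claims to the one-line verifications you perform on the closed form (\ref{eq:hatT}) (Pascal's rule for the first, direct cancellation for the second). You are also right to flag the subscript in the second identity: as printed, $T^{(n+1)}_{j+1,j+1}$ makes the left-hand side independent of $i$ while the right-hand side is not, and the corrected form $T^{(n+1)}_{i+1,j+1}=(1-t)\,T^{(n)}_{i,j}$ is the one that checks out.
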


For our next result we will need the following theorem 
\citep[Chapter VIII, Theorem 2.1]{FractalsEverywhere}.

\begin{theorem}\label{thm:Connectedness}
Let $\{f_0,f_1\}$ be a hyperbolic IFS with attractor $\mathcal{A}$. 
Let $f_0$ and $f_1$ be one-to-one on $\mathcal{A}$. If
\begin{equation*}
    f_0(\mathcal{A}) \cap f_1(\mathcal{A})=\emptyset,
\end{equation*}
then $\mathcal{A}$ is totally disconnected. If
\begin{equation*}
     f_0(\mathcal{A}) \cap f_1(\mathcal{A})\neq \emptyset,
\end{equation*}
then $\mathcal{A}$ is connected.
\end{theorem}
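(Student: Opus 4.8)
The plan is to code the attractor by finite binary words and to read off its topology from the geometry of the cylinder sets. For a word $w=w_1\cdots w_n\in\{0,1\}^n$ write $f_w=f_{w_1}\circ\cdots\circ f_{w_n}$ and set $A_w=f_w(\mathcal{A})$; the self-similarity $\mathcal{A}=f_0(\mathcal{A})\cup f_1(\mathcal{A})$ gives, by iteration, $\mathcal{A}=\bigcup_{|w|=n}A_w$ for every $n$, and each $A_w$ is compact with $\mathrm{diam}(A_w)\leqslant s^n\,\mathrm{diam}(\mathcal{A})$, where $s<1$ is the largest contractivity factor. Thus the level-$n$ pieces shrink uniformly to points as $n\to\infty$, and the two cases of the theorem are distinguished by whether these pieces overlap or stay apart.

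For the disjoint case $f_0(\mathcal{A})\cap f_1(\mathcal{A})=\emptyset$ I would first show, by induction on $n$, that distinct cylinders of the same length are pairwise disjoint. The step across different first letters is immediate, since $A_{0v}\subseteq A_0$ and $A_{1v'}\subseteq A_1$; the step within a common first letter uses injectivity, which makes each $f_k$ a bijection of $\mathcal{A}$ onto $A_k$ and hence yields $f_k(S)\cap f_k(T)=f_k(S\cap T)$, reducing to the shorter-length case. Once same-length cylinders are disjoint, each $A_w$ is both closed (being compact) and open (its complement in $\mathcal{A}$ is the finite union of the remaining compact cylinders), i.e.\ clopen. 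Given $x\neq y$ in $\mathcal{A}$, choosing $n$ with $s^n\,\mathrm{diam}(\mathcal{A})<d(x,y)$ produces a clopen cylinder containing $x$ but not $y$, so no connected subset contains both points: $\mathcal{A}$ is totally disconnected.

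For the overlapping case $f_0(\mathcal{A})\cap f_1(\mathcal{A})\neq\emptyset$ I would argue that $\mathcal{A}$ is \emph{well-chained}, i.e.\ that any two points are joined by an $\varepsilon$-chain for every $\varepsilon>0$; since $\mathcal{A}$ is compact, well-chained implies connected (a disconnection $\mathcal{A}=U\sqcup V$ into nonempty compact sets would force $d(U,V)>0$, blocking chains of mesh below this distance). The key geometric input is that applying $f_w$ to the hypothesis gives $A_{w0}\cap A_{w1}\supseteq f_w\big(f_0(\mathcal{A})\cap f_1(\mathcal{A})\big)\neq\emptyset$: \emph{sibling} cylinders always meet. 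I would then prove by induction on $n$ that the level-$n$ adjacency graph --- vertices the words of length $n$, edges joining words whose cylinders intersect --- is connected: intersecting parents at level $n$ force some child-pair to intersect, while the sibling property lets one hop between the two children of any parent, and splicing these moves along a parent-level chain yields a child-level chain. Given $x,y$ and $\varepsilon>0$, pick $n$ so that every cylinder has diameter $<\varepsilon$, take cylinders $A_w\ni x$ and $A_{w'}\ni y$, connect $w$ to $w'$ by such a chain, and select a point in each successive intersection; consecutive selected points share a cylinder and so lie within $\varepsilon$, giving the required $\varepsilon$-chain.

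The main obstacle I anticipate is the inductive step for connectedness of the adjacency graph: one must carefully convert an abstract chain of intersecting parent cylinders into an explicit chain of children, using the sibling-overlap property to traverse within each parent and a shared boundary point to cross between adjacent parents. The accompanying subtlety is the clean statement and verification that, for compact metric spaces, being well-chained for every $\varepsilon$ is equivalent to connectedness; everything else --- the uniform shrinking of diameters and the clopenness of pairwise-disjoint cylinders --- is routine once the cylinder formalism is in place.
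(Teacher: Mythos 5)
Your proposal is correct, but note that the paper does not actually prove this statement: it is quoted verbatim from Barnsley's \emph{Fractals Everywhere} (Chapter VIII, Theorem 2.1) and used as a black box, so there is no in-paper proof to compare against. What you have written is a sound, self-contained proof along the standard lines of that reference: the cylinder decomposition $\mathcal{A}=\bigcup_{|w|=n}f_w(\mathcal{A})$ with uniformly shrinking diameters, the induction (using injectivity of $f_k$ on $\mathcal{A}$ to push disjointness forward) showing same-length cylinders are pairwise disjoint and hence clopen in the first case, and the sibling-overlap property $A_{w0}\cap A_{w1}\supseteq f_w\bigl(f_0(\mathcal{A})\cap f_1(\mathcal{A})\bigr)\neq\emptyset$ driving the connectedness of the level-$n$ adjacency graph in the second case. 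The two supporting facts you flag as needing care are both standard and your sketches of them are right: a connected subset meeting a clopen set lies inside it (giving total disconnectedness once clopen pieces separate any two points), and a compact metric space that is $\varepsilon$-chained for every $\varepsilon>0$ is connected, since a disconnection into two nonempty compact pieces would be at positive distance. The splicing of parent-level chains into child-level chains is also handled correctly: a point in $A_w\cap A_{w'}$ lies in some child of each, producing the crossing edge. In short, you have reconstructed the textbook proof that the authors chose to cite rather than reproduce.
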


\begin{figure}[ht]
\begin{center}
\includegraphics[width=0.4\textwidth]{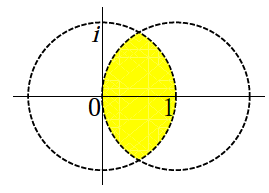}
\caption{The domain of $t$, over which the IFS $\{\mathbb{C};f_1,f_2\}$ is hyperbolic, is the intersection of two open discs $|t|<1$ and $|1-t|<1$.}\label{fig:ComplexParam_t}
\end{center}
\end{figure}

\begin{figure}[ht]
\begin{center}
\includegraphics[width=0.4\textwidth]{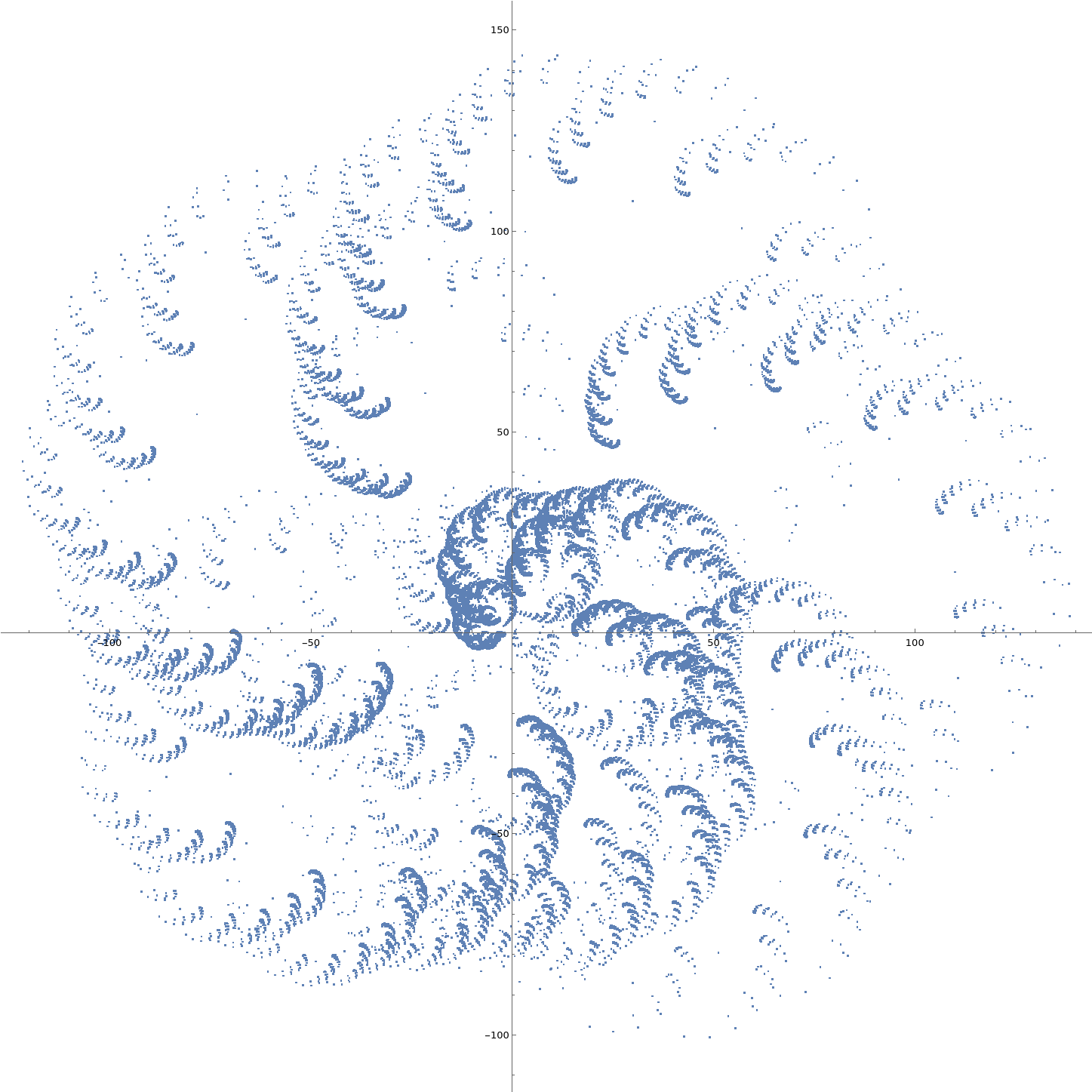}
\includegraphics[width=0.4\textwidth]{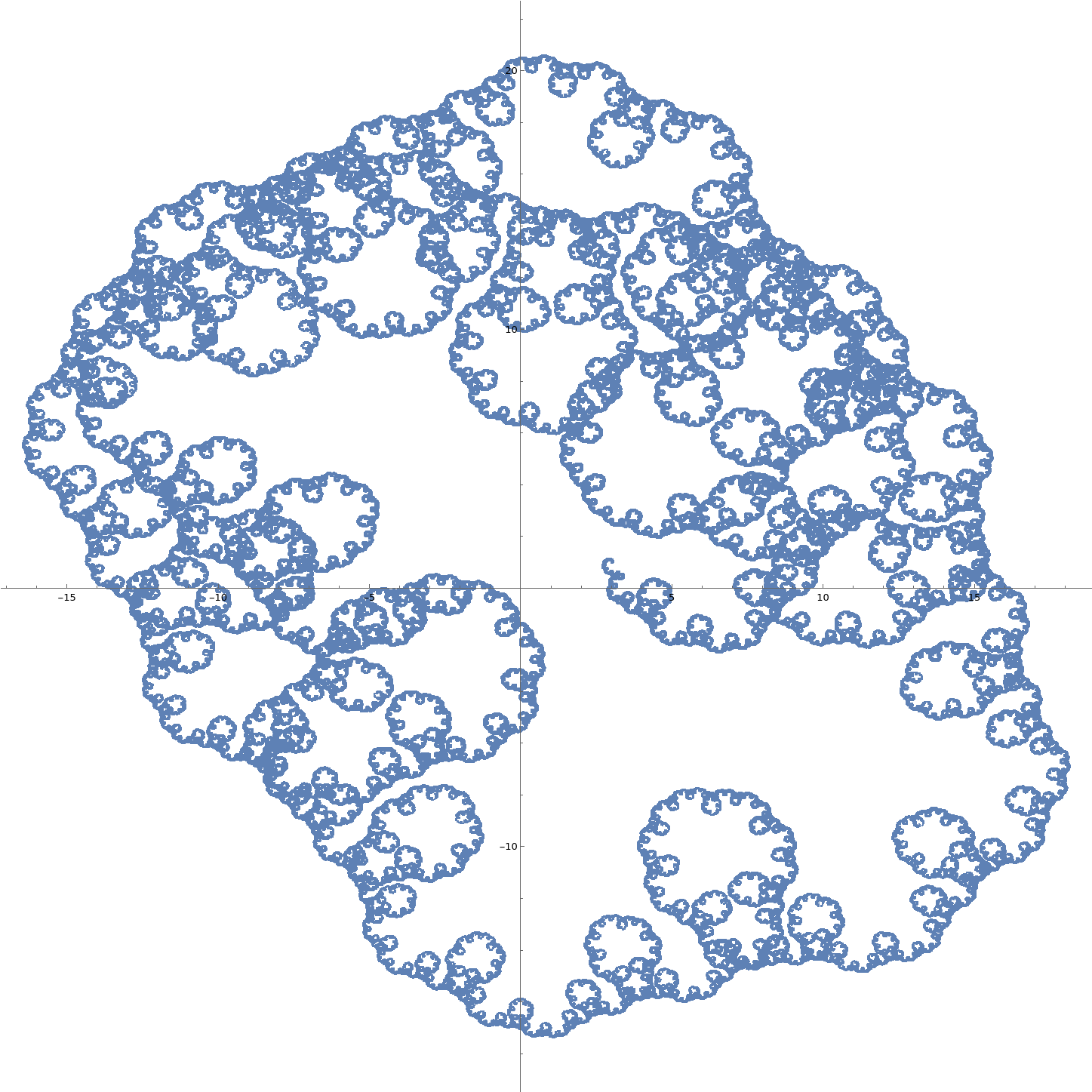}\\
\includegraphics[width=0.4\textwidth]{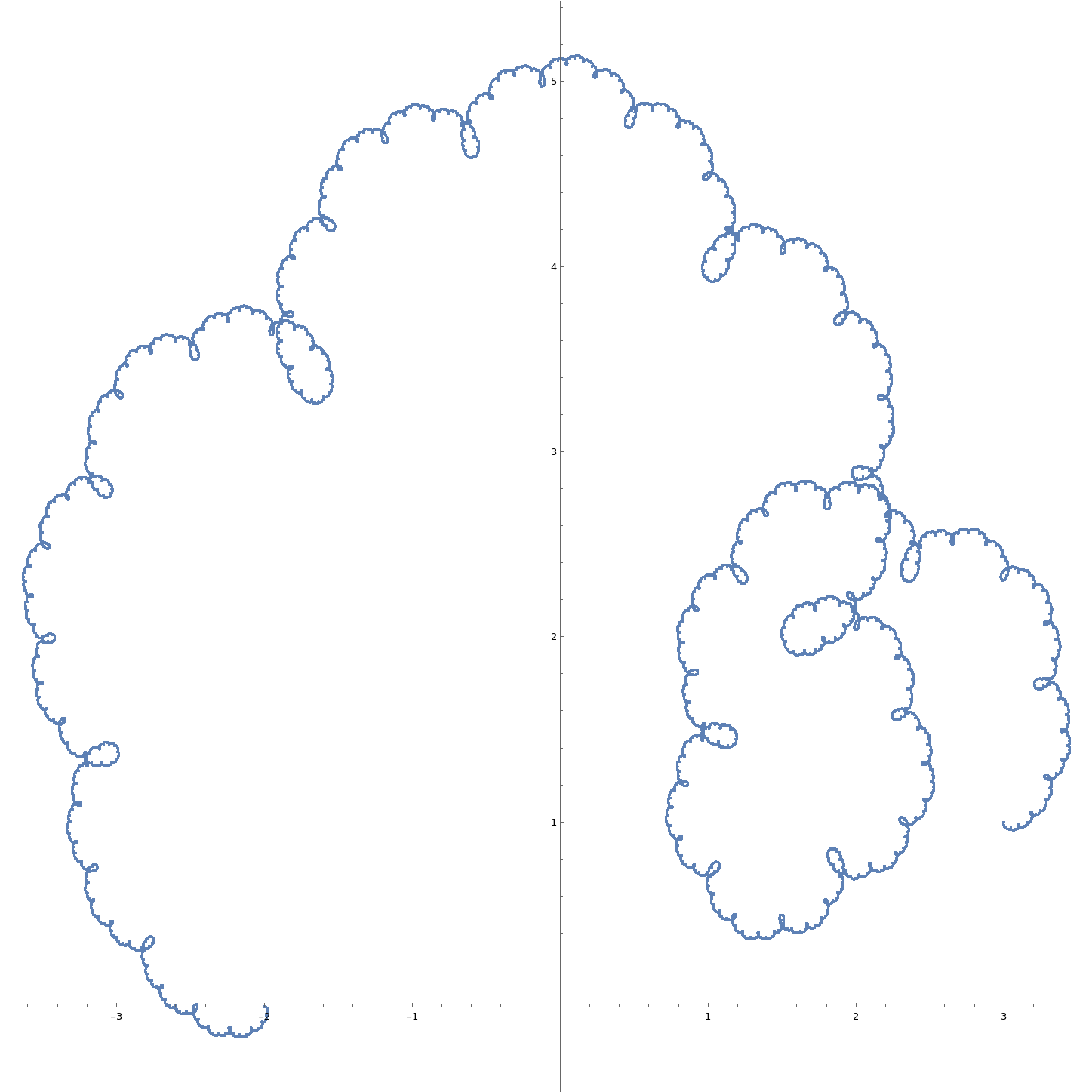}
\includegraphics[width=0.4\textwidth]{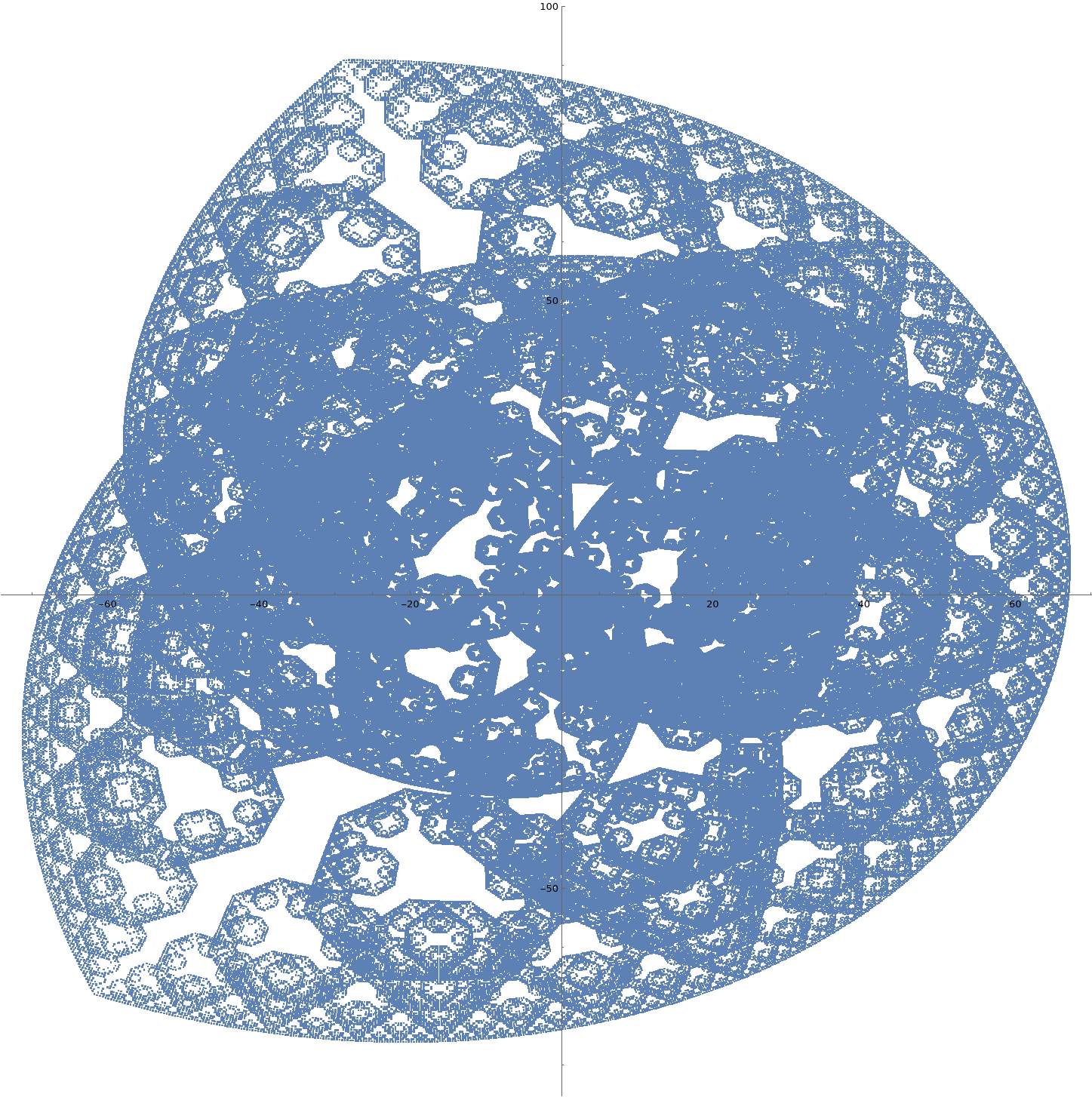}
\caption{Attractor of the IFS for the B\'ezier curve of degree 6 with control points $(-2,-1+i,2 i,1+i,1,2+i,3+i)^\top$ and complex parameters (from top to bottom and left to right): $t = 0.1 + 0.3 i$, $t = 0.4 + 0.4 i$, $t = 0.5 + 0.25 i$, and $t = 0.5 + 0.5 i$. We show the state of the system after 20 iterations. To visualize the higher dimensional attractor, we project this attractor to the complex plane. Note that the fixed point of $f_0$ is the first row of matrix $\mathbf{P}$ of (\ref{eq:BezierIFS}), i.e., $(-2,1,0,0,0,0,1)$, while the fixed point of $f_1$ is 
$(3+i,0,0,0,0,0,1)$ --- the last row of $\mathbf{P}$.}\label{fig:sexticBezier}
\end{center}
\end{figure}

We now show that the global attractor of the de Casteljau IFS for a complex B\'ezier 
curve is unique and connected. This theorem is the main result of this section.

\begin{theorem}\label{thm:IFSAttractor}
The affine IFS associated to the subdivision algorithm for a B\'ezier curve 
[as in (\ref{eq:BezierIFS}) or (\ref{eq:BezierUpperTriangular})] has a unique and 
connected attractor for any parameter $t\in\C$ such that $|t|<1\wedge|1-t|<1$.
\end{theorem}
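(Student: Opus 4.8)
The plan is to split the statement into its two assertions — uniqueness and connectedness — and dispatch each by feeding the explicit data of Lemma \ref{lma:BezierUpperTriangular} into, respectively, the joint-spectral-radius criterion recorded in the remark following Lemma \ref{lem:JointSpectralRadius}, and the connectedness dichotomy of Theorem \ref{thm:Connectedness}. Since the representations (\ref{eq:BezierIFS}) and (\ref{eq:BezierUpperTriangular}) are affinely conjugate (both arise by conjugating $\mathbf{L}(t),\mathbf{R}(t)$), the two conclusions are norm- and conjugacy-independent, so I may compute each quantity in whichever of the two forms is more convenient.

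For uniqueness I would work in the type III form (\ref{eq:BezierUpperTriangular}) and read off the linear parts as the lower-right $n\times n$ blocks: $f_0$ has linear part $\mathbf{A}_0=\operatorname{diag}(t,t^2,\ldots,t^n)$, while $f_1$ has linear part $\mathbf{A}_1$ upper-triangular with diagonal $((1-t),(1-t)^2,\ldots,(1-t)^n)$. Because both are exhibited in one and the same basis, they are already simultaneously upper-triangular, and Lemma \ref{lem:JointSpectralRadius} gives at once
\[
\rho(\mathbf{A}_0,\mathbf{A}_1)=\max\{\rho(\mathbf{A}_0),\rho(\mathbf{A}_1)\}=\max\{|t|,|1-t|\},
\]
where $|t|<1$ and $|1-t|<1$ force the dominant diagonal entries to be the first powers. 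Under the hypothesis this maximum is strictly below $1$, so the necessary and sufficient condition quoted in the remark after Lemma \ref{lem:JointSpectralRadius} yields a unique global attractor $\mathcal{A}$.

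For connectedness I would invoke Theorem \ref{thm:Connectedness}, which first asks that each $f_k$ be one-to-one on $\mathcal{A}$ and that the IFS be hyperbolic. Injectivity is immediate: the domain $|t|<1\wedge|1-t|<1$ excludes $t=0$ and $t=1$, so $\mathbf{A}_0,\mathbf{A}_1$ are invertible. To turn the maps into genuine contractions I would pass, as one may whenever the joint spectral radius lies below $1$, to an adapted norm on $\C^n$ in which $\|\mathbf{A}_0\|,\|\mathbf{A}_1\|\leqslant\rho'$ for some $\rho'\in(\rho,1)$; this changes neither the attractor nor its connectedness. It then remains to verify the overlap alternative $f_0(\mathcal{A})\cap f_1(\mathcal{A})\neq\emptyset$. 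Here I would exploit the endpoint-sharing property of de Casteljau subdivision: the last row of $\mathbf{L}(t)$ and the first row of $\mathbf{R}(t)$ in (\ref{eq:CastelSubd}) coincide, both equalling the Bernstein row $(B^n_0(t),\ldots,B^n_n(t))$. Writing $\mathbf{q}_0,\mathbf{q}_1\in\mathcal{A}$ for the fixed points of $f_0,f_1$ (the first and last rows of $\mathbf{P}$ in (\ref{eq:BezierIFS})) and using $\mathbf{M}^{(0)}=\mathbf{P}^{-1}\mathbf{L}(t)\mathbf{P}$, $\mathbf{M}^{(1)}=\mathbf{P}^{-1}\mathbf{R}(t)\mathbf{P}$, a short computation turns this coincidence into the junction identity
\[
f_0(\mathbf{q}_1)=(B^n_0(t),\ldots,B^n_n(t))\,\mathbf{P}=f_1(\mathbf{q}_0),
\]
exhibiting a common point of $f_0(\mathcal{A})$ and $f_1(\mathcal{A})$. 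Theorem \ref{thm:Connectedness} then forces $\mathcal{A}$ to be connected.

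The genuinely hard step has already been isolated and settled in Lemma \ref{lma:BezierUpperTriangular}: joint spectral radii are in general intractable, and the entire uniqueness half collapses to an elementary computation only because that lemma supplies an explicit simultaneous triangularization. Within the present argument the one point demanding care is the passage from ``joint spectral radius below one'' — which by the remark guarantees only existence and uniqueness — to a bona fide hyperbolic IFS eligible for Theorem \ref{thm:Connectedness}; this is secured by the adapted-norm construction. The overlap, by contrast, is purely algebraic, reducing to the single fact that the left subdivision ends precisely where the right subdivision begins.
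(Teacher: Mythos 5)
Your proposal is correct and follows essentially the same route as the paper: Lemma \ref{lma:BezierUpperTriangular} plus the joint-spectral-radius Lemma \ref{lem:JointSpectralRadius} give $\rho=\max\{|t|,|1-t|\}<1$ and hence a unique global attractor, and Theorem \ref{thm:Connectedness} gives connectedness once a common point of $f_0(\mathcal{A})$ and $f_1(\mathcal{A})$ is exhibited. The only difference is cosmetic: you verify the overlap in the $\mathbf{P}$-conjugated form, reading $f_0(\mathbf{q}_1)=f_1(\mathbf{q}_0)$ directly off the shared Bernstein row of $\mathbf{L}(t)$ and $\mathbf{R}(t)$, whereas the paper checks the same identity by a binomial computation on the fixed points $(1,0,\ldots,0)^\top$ and $(\binom{n}{0},\ldots,\binom{n}{n})^\top$ in the type IV coordinates (\ref{eq:ltM}).
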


\begin{proof}
From lemma \ref{lma:BezierUpperTriangular} the similarity 
$\mathbf{S}$ transforms the de Casteljau matrices $\mathbf{L}(t)$ and $\mathbf{R}(t)$ 
to the form (\ref{eq:affineLR}) III, moreover their $n\times n$ submatrices 
$\mathbf{A}_0$ and $\mathbf{A}_1$ are, respectively, diagonal and upper-triangular.
Thus also $\mathbf{A}_0$ and $\mathbf{A}_1$ are simultaneously triangularizable, independent of the chosen representation from the list (\ref{eq:affineLR}), since the change of representation can be achieved by 
a reflection $(i,j) \leftrightarrow (n-i,n-j)$ and/or a transposition  ---cf.~discussion following table (\ref{eq:affineLR}).

From lemma \ref{lem:JointSpectralRadius} we then have that
\[
 \rho(\mathbf{A}_0,\mathbf{A}_1) 
   =\max\{\rho(\mathbf{A}_0),\rho(\mathbf{A}_1)\} = \max\{|t|,|1-t|\}.
\]
and hence in the domain $t\in\mathbb{C},\,|t|<1\wedge|1-t|<1$, all infinite products
of the matrices $\mathbf{A}_0$ and $\mathbf{A}_1$ converge to the zero
matrix, which ensures the contractivity of the IFS (in some metric). 
From Banach theorem \ref{thm:BanachFixedPoint} this IFS has a unique 
global attractor.

For computations we shall represent the IFS with the matrices 
\begin{equation}\label{eq:ltM}
\bM^{(0)}_{i,j}=\binom{0}{i-j}t^i
\hskip 40pt
\bM^{(1)}_{i,j}=\binom{n-j}{n-i}t^{i-j}(1-t)^j
\qquad i,j=0,\ldots,n.
\end{equation}
of type IV in (\ref{eq:affineLR}); these are the transposes of the 
matrices (\ref{eq:BezierUpperTriangular}), and act on the right on
$(n+1)$-dimensional column vectors representing homogeneous coordinates 
in $\C^n$.

Let $\{f_0,f_1\}$ be the corresponding affine IFS. 
The maps $f_0$ and $f_1$, being affine, are one-to-one on the attractor $\cA$, 
and in the assumed parameter range they are contractions, so their respective
fixed points $z_0^*$ and $z_1^*$ belong to $\cA$.

We claim that $z_0^*=(1,0^m)^\top$ and 
$z_1^*=(\binom{n}{0},(\binom{n}{1},\ldots,\binom{n}{n})^\top$. 
The former is clear, since $\bM^{(0)}$ is diagonal. As to the latter, 
using (\ref{eq:BinomialIdentity}) and the binomial theorem, we find
\begin{eqnarray*}
(\bM^{(1)}z_1^*)_i&=&\sum_j\binom{n-j}{n-i}\binom{n}{j}t^{i-j}(1-t)^j\\
&=&\sum_j\binom{n}{n-j}\binom{n-j}{n-i}t^{i-j}(1-t)^j\\
&=&\binom{n}{n-i}\sum_j\binom{i}{i-j}t^{i-j}(1-t)^j=\binom{n}{i},
\end{eqnarray*}
as desired.
A straightforward calculation shows that
$\bM^{(0)}z_1^*=\bM^{(1)}z_0^*$ are both equal to
$$
\bigl(\binom{n}{0},t\binom{n}{1},\ldots,t^n\binom{n}{n}\bigr)^\top.
$$
The above shows that $f_0(\mathcal{A}) \cap f_1(\mathcal{A})\neq \emptyset$,
and hence $\cA$ is connected by theorem \ref{thm:Connectedness}. 
\end{proof}

The domain of $t$ over which the IFS has a unique global attractor 
is displayed in figure \ref{fig:ComplexParam_t},
while in figure \ref{fig:sexticBezier} we show examples of the unique global
attractor of the IFS, for specific parameter values.

\begin{remark}
Tsianos and Goldman in \citep[Lemma 3.4]{Tsianos2011} establish the same domain of complex parameter $t$ over which the lengths of the polygons generated by recursive subdivision at $t$ converge to zero. However, the starting set of their algorithm is a control polygon, whereas our starting set is arbitrary, and in particular not necessarily connected; thus their result is not readily applicable in our context.
\end{remark}

\section{The Takagi curve}\label{sec:Takagi}
The Takagi curve is a graph of the Takagi (or Blancmange) function.
Some of its remarkable properties are described in the surveys
\citep{Allaart, Lagarias}. The Takagi function is defined by
\begin{equation*}
    \rT(x)=\sum_{n=0}^{\infty} \frac{\sigma(2^nx)}{2^n}\qquad x\in [0,1]
\end{equation*}
where $\sigma(x)=\min_{n\in\mathbb{Z}} |x-n|$, that is, 
$\sigma(x)$ is the distance from $x$ to the nearest integer.

The Takagi curve has infinite length over any nonempty open interval in 
$[0,1]$ \citep[Theorem 11.4]{Lagarias}, and it is self-similar in the
sense that it satisfies a dyadic self-similarity equation \citep[Theorem 4.1]{Lagarias}. 
However, if we adopt Mandelbrot's definition of a fractal as a set whose 
Hausdorff dimension is strictly greater than its topological dimension 
\citep[p. 15]{Mandelbrot}, then the Takagi curve is not a fractal
since its topological dimension and its Haussdorff dimension 
(as a subset of $\R^2$) are both equal to 1 \citep[Theorem 11.2]{Lagarias}. 

We shall prove that the Takagi curve represents the linearisation 
of the attractor of an IFS for a B\'ezier curve with complex
parameter, as the parameter's imaginary part goes to zero.
We consider the de Casteljau algorithm for the B\'ezier curve with two
control points, $0$ and $1$, in the complex plane. For a real parameter
$t$ the attractor $\mathcal{A}$ is the line segment connecting these two points,
but for complex $t$ the attractor is non-trivial.
The IFS is represented in homogeneous coordinates as the pair of complex 
matrices of type I in (\ref{eq:affineLR})
\begin{equation}\label{eq:M01}
\mathbf{M}^{(0)} = \left(\begin{array}{cc} t& 0\\0 &1\end{array}\right)
\hskip 30pt
\mathbf{M}^{(1)} = \left(\begin{array}{cc} 1-t& t\\0 &1 \end{array} \right).
\end{equation}
The above matrices correspond to the pair of complex maps
\begin{equation}\label{eq:f}
f^{(0)}(z)=tz\qquad f^{(1)}(z)=(1-t)z+t.
\end{equation}
\medskip
We shall consider parameters of the form $t=1/2+\mathrm{i}\beta$, 
with real $\beta$. 
From theorem \ref{thm:IFSAttractor},
for $|\beta|<\sqrt{3}/2$ the IFS is hyperbolic with a connected 
attractor $\mathcal{A}=\mathcal{A}(\beta)$. 
We then define the scaling map
\begin{equation}\label{eq:g}
    \rg: \mathbb{C}\times\mathbb{R} \to \mathbb{C} \qquad \rg(z,\beta)
    =\Re(z)+ \beta\,\ri\,\Im(z)
\end{equation}
together with the scaled attractor of the IFS:
\begin{equation}\label{eq:A^*}
\mathcal{A}^*(\beta)=\rg(\mathcal{A}(\beta),(2\beta)^{-1}).
\end{equation}
The properties of the scaled attractor depend strongly on $\beta$. 
In Figures \ref{fig:Ccurve}--\ref{fig:Tak48} we plot 
$\mathcal{A}^*(\beta)$ of (\ref{eq:A^*}) for various values of $\beta$, 
and the graph $\cT$ of the Takagi function for comparison. 
In all cases, the IFS is iterated 15 times, with
the unit interval as the initial set.

\begin{figure}[ht]
\begin{center}
\includegraphics[width=0.7\textwidth]{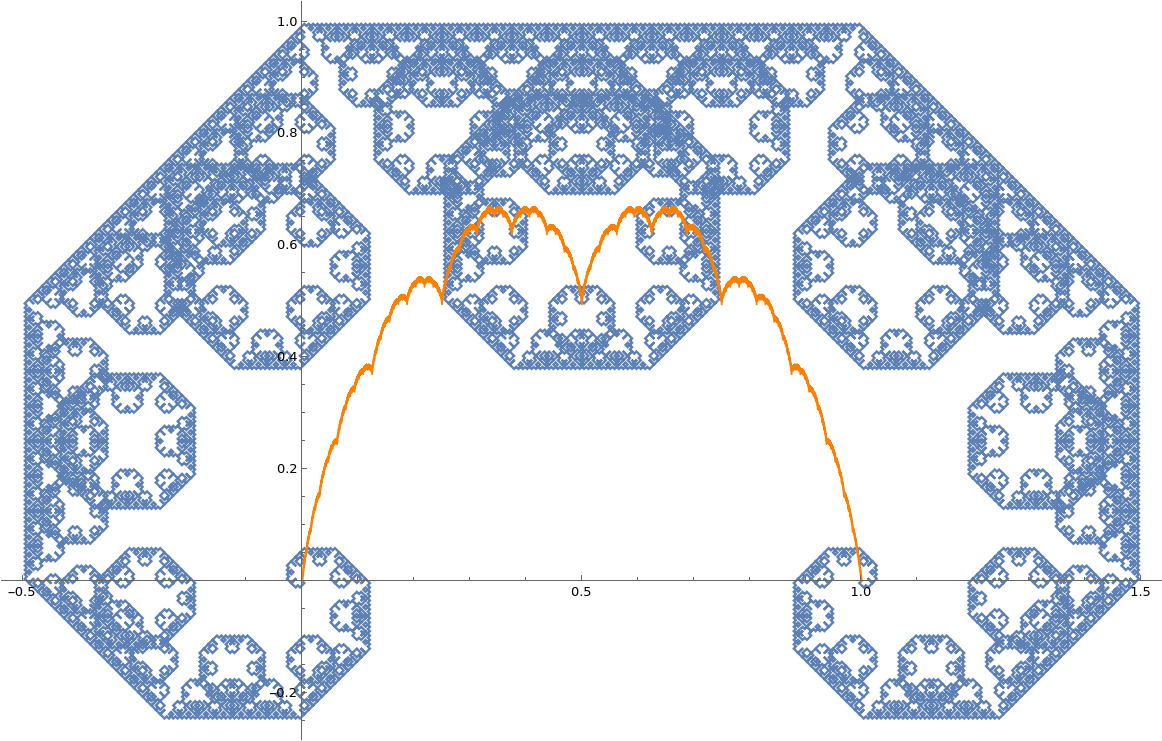}
\caption{The blue curve is $\mathcal{A}^\ast(\beta)$, 
which for $\beta=\frac{1}{2}$ becomes the L\'evy C curve. 
The orange curve is the graph $\cT$ of the Takagi function 
$\rT$.}\label{fig:Ccurve}
\end{center}
\end{figure}

\begin{figure}[ht]
\begin{center}
\includegraphics[width=0.49\textwidth]{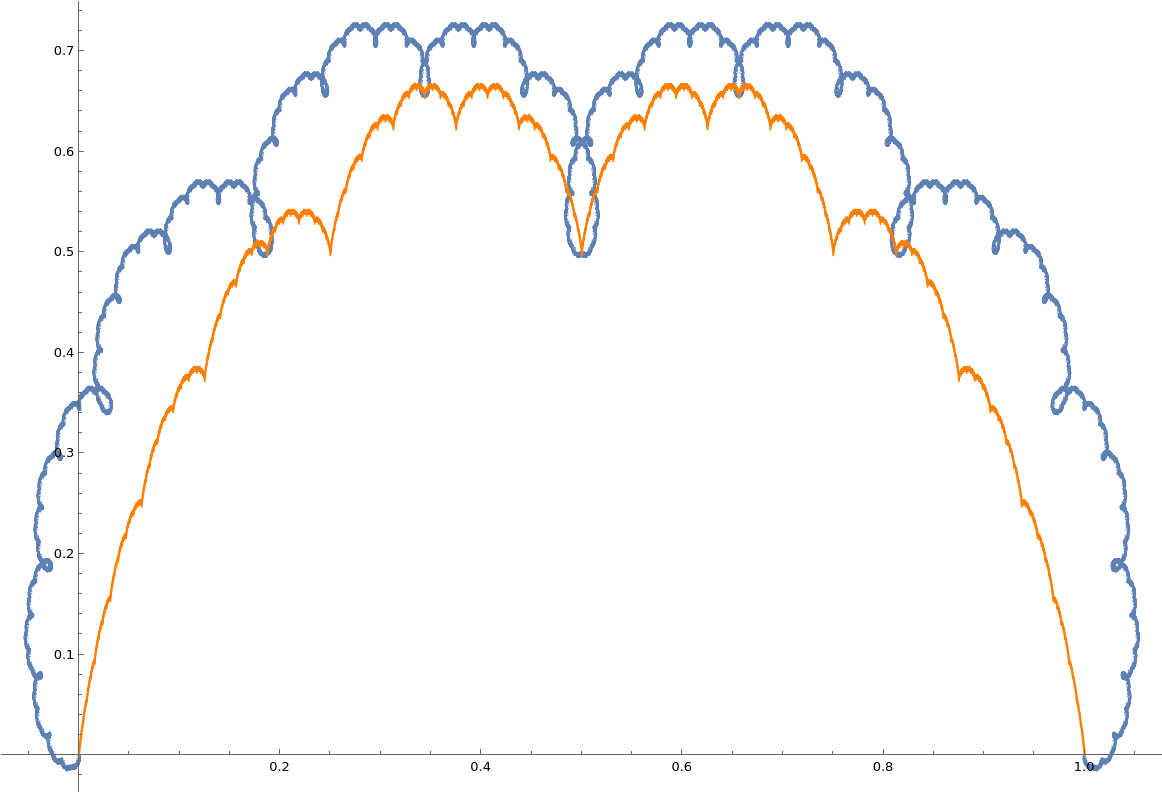}
\includegraphics[width=0.49\textwidth]{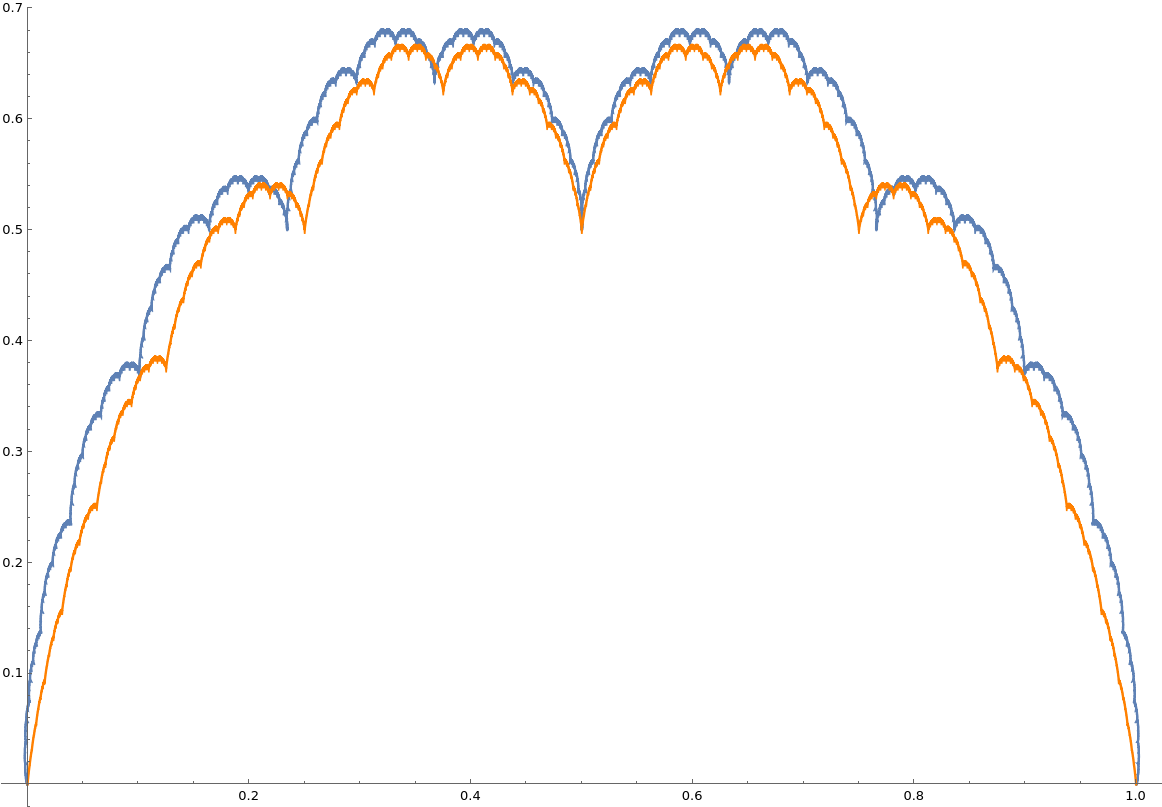}
\caption{Left: the scaled attractor $\mathcal{A}^\ast(\beta)$ 
for $\beta=\frac{1}{4}$ (blue) and the Takagi curve (orange).
Right: the same for $\beta=\frac{1}{8}$.
For $\beta=\frac{1}{4}$, the attractor features crunodes; 
these crunodes appear to have turned into cusps for $\beta=\frac{1}{8}$,
although this is not the case (see remark after theorem 
\ref{thm:Takagi}).}\label{fig:Tak48} 
\end{center}
\end{figure}

\medskip

We begin our analysis of this class of phenomena.
Let $\Omega=\{0,1\}^\N$ be the space of infinite binary sequences.
Any $d\in\Omega$ represents the digits of a real number 
$x^{(d)}\in [0,1]$. 
Since the elements of $\Z[1/2]$ are precisely the real numbers 
that admit two distinct binary representations, for uniqueness
we shall stipulate that these digits must eventually be all 
zeros rather than all ones, with the exception of $d=(1^\infty)$.
With this provision the map 
\begin{equation}\label{eq:pi}
\pi:\Omega \to [0,1]\hskip 30pt
d\mapsto x^{(d)}=\sum_{k=1}^\infty\frac{d_k}{2^k}
\end{equation}
is a bijection, and we shall speak of convergence and measure 
in $\Omega$ in terms of the distance and Lebesgue measure in the 
unit interval. Note that $d$ is eventually periodic iff $x^{(d)}\in\Q$. 

Next we define the operator that reverses the first $n$ terms of a sequence:
For every $d=(d_1,d_2,\ldots)\in\Omega$ and $n\in\N$ we let
\begin{equation}\label{eq:revd}
 \revd(n):=(d_n,d_{n-1},\ldots,d_1,0^\infty).
\end{equation}
From our convention on digits it follows that 
$\lim_{n\to\infty}\revd (n)=(0^\infty)$ iff
$x^{(d)}$ is a dyadic rational\footnote{A rational whose denominator 
is a power of $2$. According to our convention, it has only finitely 
many non-zero binary digits.}.

For any sequence $d$ we define
\begin{equation}\label{eq:ru}
r_n^{(d)}=\sum_{k=1}^n\frac{d_k}{2^k}
\hskip 30pt
u_n^{(d)}=\sum_{k=1}^nd_k.
\end{equation}
Thus $r_n$ is the rational number whose binary digits are $d(n)$
and $u_n^{(d)}$ is the number of ones in $d(n)$.
Plainly, 
\begin{equation}\label{eq:limit}
\lim_{n\to\infty}r_n^{(d)}=x^{(d)}.
\end{equation}

Further, given $d$, we define the matrix product
\begin{equation}\label{eq:M_n}
   \mathbf{M}_n(d)=\mathbf{M}^{(d_1)}\mathbf{M}^{(d_2)}\dots\mathbf{M}^{(d_n)},
\end{equation}
which corresponds to the affine map
\begin{equation}\label{eq:f^d}
f_n^{(d)}=f^{(d_1)}\circ f^{(d_2)}\circ\cdots\circ f^{(d_n)}.
\end{equation}
This map performs $n$ iterates, selecting the map $f^{(k)}$
according to the digit $d_k$ in reverse order, that is, the 
symbolic sequence that defines $f^{(d)}$ is $\revd$.
(The reason for this unconventional choice will become clear below.)

The map $f_n^{(d)}$, being a composition of affine maps, is affine,
and an easy induction shows that it has the form
\begin{equation}\label{eq:f_n}
    f_n^{(d)}(z)=U_n^{(d)}z+Z_n^{(d)}\qquad U_n^{(d)}=(1-t)^{u_n}t^{n-u_n},
\end{equation}
where $Z_n^{(d)}$ is a polynomial in $t$.

We now specialise to parameters of the form $t=1/2+\mathrm{i}\beta$, 
with real $\beta$.
We choose $z=0\in\mathcal{A}$ as the initial point of the orbit. 
From (\ref{eq:f_n})  we obtain
\begin{equation}\label{eq:Z_n}
f_n^{(d)}(0)=Z_n^{(d)},
\end{equation}
and from (\ref{eq:f}) we see that $Z_n^{(d)}$ is a polynomial in
$\ri\beta$ with coefficients in $\Z[1/2]$, the set of rational
numbers whose denominator is a power of 2.
Writing
\begin{equation}\label{eq:Zpoly}
Z_n^{(d)}=\sum_{k\geqslant 0} a_{k,n}^{(d)}(\ri\beta)^{k}
=X_n^{(d)}+\mathrm{i}Y_n^{(d)}, \qquad a_{k,n}\in\Z[1/2],
\end{equation} 
we find that $X_n^{(d)}$ and $Y_n^{(d)}$ are real polynomials
that correspond, respectively, to the even and odd powers of $\beta$.

\begin{lemma}\label{lma:Zpoly}
The polynomials $Z_n^{(d)}$ of (\ref{eq:Z_n}) satisfy the 
recursion relation
\begin{equation}\label{eq:Zrecursion}
Z_0^{(d)}=0\hskip 30pt
Z_{n+1}^{(d)}=\begin{cases} Z_{n}^{(d)} & d_{n+1}=0\\
                      Z_{n}^{(d)}+W_{n}^{(d)}& d_{n+1}=1
        \end{cases}
\end{equation}
where
\begin{equation}\label{eq:W}
W_n^{(d)}(\beta)=\sum_{m\geqslant 0} w_{m,n}\beta^m
\hskip 30pt
w_{m,n}=\frac{\mathrm{i}^m}{2^{n+1-m}}
     \sum_{k=0}^m(-1)^k\binom{u_n}{k}\binom{n+1-u_n}{m-k}.
\end{equation}
\end{lemma}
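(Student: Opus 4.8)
The plan is to exploit the recursive structure of the composition (\ref{eq:f^d}) directly. Peeling the last factor off $f_{n+1}^{(d)}$ gives $f_{n+1}^{(d)}=f_n^{(d)}\circ f^{(d_{n+1})}$, so that evaluating at the initial point $z=0$ and invoking (\ref{eq:Z_n}) yields $Z_{n+1}^{(d)}=f_n^{(d)}\bigl(f^{(d_{n+1})}(0)\bigr)$. From the explicit maps (\ref{eq:f}) one reads off $f^{(0)}(0)=0$ and $f^{(1)}(0)=t$. The base case $Z_0^{(d)}=0$ is immediate, since the empty composition is the identity.

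The two branches of (\ref{eq:Zrecursion}) then follow at once. When $d_{n+1}=0$ we get $Z_{n+1}^{(d)}=f_n^{(d)}(0)=Z_n^{(d)}$. When $d_{n+1}=1$, substituting $z=t$ into the affine form (\ref{eq:f_n}) gives $Z_{n+1}^{(d)}=f_n^{(d)}(t)=U_n^{(d)}\,t+Z_n^{(d)}$, so that the increment is
\begin{equation*}
W_n^{(d)}=U_n^{(d)}\,t=(1-t)^{u_n}\,t^{\,n+1-u_n},
\end{equation*}
upon inserting the value of $U_n^{(d)}$ recorded in (\ref{eq:f_n}). Because $u_n$ counts the ones among the \emph{first} $n$ digits (\ref{eq:ru}), the two exponents sum to $n+1$, which will be reflected in the power of $1/2$ appearing below.

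It then remains only to expand this product in powers of $\beta$. Setting $t=1/2+\ri\beta$ and $1-t=1/2-\ri\beta$, I would apply the binomial theorem to each factor separately, multiply the two sums, and collect all contributions with $k+j=m$, where $k$ indexes the expansion of $(1/2-\ri\beta)^{u_n}$ and $j$ that of $(1/2+\ri\beta)^{n+1-u_n}$. The common factor $(1/2)^{(u_n-k)+(n+1-u_n-j)}=2^{-(n+1-m)}$, together with $\ri^{m}$, pulls out of the convolution, leaving
\begin{equation*}
w_{m,n}=\frac{\ri^{m}}{2^{\,n+1-m}}\sum_{k=0}^{m}(-1)^k\binom{u_n}{k}\binom{n+1-u_n}{m-k},
\end{equation*}
which is precisely (\ref{eq:W}).

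The computation is essentially mechanical, so no serious obstacle arises; the work lies entirely in the second display. The points demanding attention are the bookkeeping of the binomial convolution and the observation that the alternating sign $(-1)^k$ originates solely from the factor $1-t=1/2-\ri\beta$, the factor $t=1/2+\ri\beta$ contributing none. It is worth confirming that the recursion and the closed form stay consistent at the boundary indices (for instance $u_n\in\{0,\dots,n\}$), but the binomial coefficients enforce the correct summation ranges automatically.
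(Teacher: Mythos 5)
Your proposal is correct and follows essentially the same route as the paper: peel off the last map to get $Z_{n+1}^{(d)}=f_n^{(d)}(f^{(d_{n+1})}(0))$, use $f^{(0)}(0)=0$, $f^{(1)}(0)=t$ and the affine form (\ref{eq:f_n}) to identify $W_n^{(d)}=U_n^{(d)}t=(1-t)^{u_n}t^{\,n+1-u_n}$, then expand binomially and convolve. The paper merely dresses the same computation as an induction on $n$ with an explicit $n=0$ check; your direct derivation of the recursion is equivalent.
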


\begin{proof} 
We proceed by induction on $n$. 
For $n=0$ from (\ref{eq:f^d}) we have
$$
Z^{(d)}_1=\begin{cases}0 & d_1=0\\ 
\frac{1}{2}+\ri\beta & d_1=1,
\end{cases}
$$
so formula (\ref{eq:Zrecursion}) holds in the case $d_1=0$.
If $d_1=1$, then the sums (\ref{eq:W}) with $n=u_n=0$ are restricted 
to $k=0$ and $m=0,1$, for which all binomial coefficients are 1.
We find $w_{0,0}=1/2$ and $w_{1,0}=\ri$, in accordance with 
(\ref{eq:Zrecursion}).

Assume now that (\ref{eq:Zrecursion}) and (\ref{eq:W}) hold for some 
$n\geqslant 1$.
We have two cases. If $d_{n+1}=0$, then from (\ref{eq:f}) we 
have $f_{d_{n+1}}(0)=0$, that is, the first iterate of
the map doesn't change the initial point. Hence,
from (\ref{eq:Z_n}) we have $Z_{n+1}^{(d)}=Z_n^{(d)}$, so that
\begin{equation}\label{eq:d=0}
a_{k,n+1}^{(d)}=a_{k,n}^{(d)} \quad k=0,1,\ldots\hskip 30pt (d_{n+1}=0),
\end{equation}
in accordance with (\ref{eq:Zpoly}) and (\ref{eq:Zrecursion}).

If $d_{n+1}=1$, then we have $f_{d_{n+1}}(0)=t$
and from (\ref{eq:f_n}) we have
\begin{equation}\label{eq:U}
Z_{n+1}^{(d)}=f_{n+1}^{(d)}(0)=f_n^{(d)}(t)=U_n^{(d)}t+Z_n^{(d)}.
\end{equation}
Letting $W_{n}^{(d)}=U_{n}^{(d)}t$ with $t=1/2+\mathrm{i}\beta$ and 
using the binomial theorem, we find
\begin{eqnarray}\label{eq:Wd=1}
W_n^{(d)}&=&
 (1/2-\mathrm{i}\beta)^{u_n}(1/2+\mathrm{i}\beta)^{n-u_n+1}\nonumber\\
&=&\left(\sum_k\binom{u_n}{k}(-1)^k(\mathrm{i}\beta)^k
    \frac{1}{2^{u_n-k}}\right) \times
    \left(\sum_\ell\binom{n-u_n+1}{\ell}
    (\mathrm{i}\beta)^\ell\frac{1}{2^{n-u_n+1-\ell}}\right)\nonumber\\
&=&\sum_k(-1)^{k}\binom{u_n}{k}\sum_\ell\binom{n-u_n+1}{\ell}
  \frac{1}{2^{n+1-k-\ell}}(\mathrm{i}\beta)^{k+\ell}.
\end{eqnarray}
Letting $k+\ell=m$ in (\ref{eq:Wd=1}) we obtain
\begin{equation*}\label{eq:w}
w_{m,n}=\frac{\mathrm{i}^m}{2^{n+1-m}}\sum_k (-1)^k\binom{u_n}{k}\binom{n+1-u_n}{m-k}.
\end{equation*}
For $k<0$ ($k>m$) the first (second) binomial coefficient is zero, 
so we may restrict the range of summation to $0\leqslant k\leqslant m$,
which completes the proof of the recursion formulae.
\end{proof}

\begin{lemma}\label{lma:Convergence}
Let $(a_{k,n}^{(d)})$ be the coefficients of $Z_{n}^{(d)}$, as in 
equation (\ref{eq:Zpoly}). Then for all $d$ and $k$ the following limit exists
\begin{equation}\label{eq:Convergence}
a_{k}^{(d)}:=\lim_{n\to\infty} a_{k,n}^{(d)},\qquad
\mbox{with the bound }\qquad |a_k^{(d)}|<2^{k+1}
\end{equation}
independent on $d$. Thus the polynomials $Z_n^{(d)}$ converge coefficientwise
to a convergent power series:
\begin{equation}\label{eq:Z}
Z^{(d)}(\beta):=\lim_{n\to\infty}Z_n^{(d)}(\beta),
\qquad |\beta|<\frac{1}{2}.
\end{equation}
\end{lemma}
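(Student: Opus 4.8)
The plan is to reduce everything to the convergence of a single scalar series, read off coefficientwise from the recursion of lemma \ref{lma:Zpoly}. Comparing (\ref{eq:Zrecursion}) with the expansions (\ref{eq:Zpoly}) and (\ref{eq:W}), I extract the coefficient of $(\ri\beta)^k$ on each side. Since $W_n^{(d)}=\sum_m w_{m,n}\beta^m$, the coefficient of $(\ri\beta)^k$ in $W_n^{(d)}$ is the real number $c_{k,n}:=w_{k,n}/\ri^k=2^{-(n+1-k)}\sum_{l=0}^k(-1)^l\binom{u_n}{l}\binom{n+1-u_n}{k-l}$. Hence the coefficients obey the additive recursion $a_{k,0}^{(d)}=0$, with $a_{k,n+1}^{(d)}=a_{k,n}^{(d)}+c_{k,n}$ if $d_{n+1}=1$ and $a_{k,n+1}^{(d)}=a_{k,n}^{(d)}$ if $d_{n+1}=0$, so that
\[
a_{k,n}^{(d)}=\sum_{\substack{0\leqslant j<n\\ d_{j+1}=1}}c_{k,j}.
\]
Thus the existence of the limit in (\ref{eq:Convergence}) and its bound both reduce to controlling the partial sums of $\sum_j c_{k,j}$.

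The key estimate comes from recognising the inner alternating sum in $c_{k,j}$ as the coefficient of $x^k$ in $(1-x)^{u_j}(1+x)^{j+1-u_j}$. Applying the triangle inequality and then Vandermonde's identity $\sum_l\binom{p}{l}\binom{q}{k-l}=\binom{p+q}{k}$ with $p+q=u_j+(j+1-u_j)=j+1$, I obtain the $d$-independent bound $|c_{k,j}|\leqslant 2^{-(j+1-k)}\binom{j+1}{k}$. Summing over $j$ and using the generating-function identity $\sum_{i\geqslant 0}\binom{i}{k}2^{-i}=2$ (the negative binomial series at $x=1/2$) gives
\[
\sum_{j\geqslant 0}|c_{k,j}|\;\leqslant\;2^{k}\sum_{i\geqslant 1}\binom{i}{k}2^{-i}\;\leqslant\;2^{k+1}.
\]
This shows $\sum_j c_{k,j}$ converges absolutely, so the partial sums $a_{k,n}^{(d)}$ form a Cauchy sequence and $a_k^{(d)}$ exists, and it yields $|a_k^{(d)}|\leqslant 2^{k+1}$, uniformly in $d$.

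The hard part will be upgrading this to the strict inequality $|a_k^{(d)}|<2^{k+1}$, since the majorant is already tight: $\sum_{j\geqslant 0}2^{-(j+1-k)}\binom{j+1}{k}=2^{k+1}$ exactly, leaving no slack in that step, so strictness must be harvested from one of the two inequalities above. I would argue it by cases. For $k=0$ one has $c_{0,j}=2^{-(j+1)}$, hence $a_0^{(d)}=x^{(d)}\in[0,1]$ by (\ref{eq:pi}), giving $|a_0^{(d)}|\leqslant 1<2$. For $k\geqslant 1$ I would use a dichotomy on $d$. If $d$ is not eventually equal to $1$, the restriction to indices with $d_{j+1}=1$ discards infinitely many strictly positive majorant terms $2^{-(j+1-k)}\binom{j+1}{k}$ (positive for $j\geqslant k-1$), so $\sum_{j:\,d_{j+1}=1}|c_{k,j}|<2^{k+1}$. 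If instead $d$ is eventually $1$, then $j+1-u_j$ is eventually a positive constant, so for all large $j$ the defining sum of $c_{k,j}$ contains two nonzero terms of opposite parity in $l$ (namely $l=k-1$ and $l=k$); the triangle inequality is then strict for infinitely many retained indices, again forcing the strict bound. Either way $|a_k^{(d)}|<2^{k+1}$.

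Finally, the uniform bound $|a_k^{(d)}|<2^{k+1}$ shows that $\sum_k a_k^{(d)}(\ri\beta)^k$ has radius of convergence at least $1/2$, which defines $Z^{(d)}(\beta)$ of (\ref{eq:Z}) as an analytic function on $|\beta|<1/2$. Since the same bound $|a_{k,n}^{(d)}|\leqslant 2^{k+1}$ holds for the partial sums, the differences are dominated by $|a_{k,n}^{(d)}-a_k^{(d)}|\,|\beta|^k\leqslant 2^{k+2}|\beta|^k$, which is summable for $|\beta|<1/2$; dominated convergence for series then gives $Z_n^{(d)}(\beta)\to Z^{(d)}(\beta)$, uniformly on compact subsets of the disc, completing the passage from coefficientwise to pointwise convergence.
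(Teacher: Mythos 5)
Your proof is correct and follows essentially the same route as the paper's: telescoping the coefficient recursion from lemma \ref{lma:Zpoly}, bounding the increments via the triangle inequality and Vandermonde's identity $\sum_l\binom{u_j}{l}\binom{j+1-u_j}{k-l}=\binom{j+1}{k}$, and summing with the negative binomial series at $z=1/2$ to obtain the majorant $2^{k+1}$. You are in fact more careful than the paper on two points it glosses over --- that increments are only accrued when $d_{j+1}=1$, and that the limit of quantities each strictly below $2^{k+1}$ need not itself be strictly below $2^{k+1}$, which your case analysis on $d$ repairs.
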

\begin{proof}
Using the identities \citep[pp 174,199]{GKP}
$$
\sum_k\binom{r}{k}\binom{s}{m-k}=\binom{r+s}{m},
\hskip 30pt
\frac{z^m}{(1-z)^{m+1}}=\sum_{k\geqslant 0}\binom{k}{m}z^k 
\quad (m\geqslant 0), 
$$
and formula (\ref{eq:W}) we obtain the following uniform bound
(to lighten up the notation, we write $a_{k,n}$ for $a_{k,n}^{(d)}$):
\begin{eqnarray}
|a_{m,n+1}|&=&|a_{m,n+1}-a_{m,0}|=\left|\sum_{j=0}^n (a_{m,j+1}-a_{m,j} )\right|
\nonumber\\
&\leqslant&\sum_{j=0}^n\bigl|a_{m,j+1}-a_{m,j}\bigr|=\sum_{j=0}^n|w_{m,j}|
\nonumber\\
&=& \sum_{j=0}^n \frac{2^m}{2^{j+1}}
    \sum_k\binom{u_j}{k}\binom{j+1-u_j}{m-k}
\nonumber\\
&=& 2^m\sum_{j=0}^n \frac{1}{2^{j+1}}\binom{j+1}{m}
\nonumber\\
&<& 2^m\sum_{j=0}^\infty \frac{1}{2^{j+1}}\binom{j+1}{m}
=2^{m+1}.\nonumber
\end{eqnarray}
This derivation shows that, for all $d\in\Omega$, the series 
$\sum_{j}w_{m,j}$ converges absolutely, which implies 
that, as $n\to\infty$, the sequence of polynomials $Z_n^{(d)}$ 
converges coefficientwise to a power series; the latter in turn 
converges for $|\beta|<1/2$ for all $d$.
\end{proof}

We shall use the following lemma (see \citep[p.~19--20]{Allaart}).
\begin{lemma}\label{lma:Tak1}
For any sequence $d\in\Omega$ let $r_n^{(d)}$ and $u_n^{(d)}$ be as 
in (\ref{eq:ru}), and let $\rT$ be the Takagi function.
Then for any $m\geqslant n$:
\begin{equation}\label{eq:lemTak1}
    \rT\big(r_n^{(d)}+\frac{1}{2^m}\big)-\rT\big(r_n^{(d)}\big)
           =\frac{m-2u_n^{(d)}}{2^m}.
\end{equation}
\end{lemma}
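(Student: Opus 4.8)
The plan is to reduce the statement to a single slope computation for a piecewise-linear truncation of the defining series. Write $T_m(x)=\sum_{j=0}^{m-1}2^{-j}\sigma(2^jx)$ for the $m$-th partial sum of $\rT$. Since $r_n^{(d)}$ is an integer multiple of $2^{-n}$, and hence of $2^{-m}$ whenever $m\geqslant n$, both $r_n^{(d)}$ and $r_n^{(d)}+2^{-m}$ are integer multiples of $2^{-m}$. Therefore $2^jr_n^{(d)}$ and $2^j(r_n^{(d)}+2^{-m})$ are integers for every $j\geqslant m$, so $\sigma$ vanishes there and the tail $\sum_{j\geqslant m}2^{-j}\sigma(2^j\,\cdot\,)$ contributes nothing at either point. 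It thus suffices to prove $T_m(r_n^{(d)}+2^{-m})-T_m(r_n^{(d)})=(m-2u_n^{(d)})/2^m$, which avoids any convergence subtleties.

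Next I would exploit that $T_m$ is piecewise linear. Each summand $2^{-j}\sigma(2^jx)$ has breakpoints exactly at the multiples of $2^{-(j+1)}$; for $0\leqslant j\leqslant m-1$ these are all multiples of $2^{-m}$, so $T_m$ is affine on the closed interval $[r_n^{(d)},r_n^{(d)}+2^{-m}]$, which is precisely one dyadic cell of length $2^{-m}$. Consequently the increment equals the constant slope of $T_m$ on this cell times its length $2^{-m}$, and the whole problem collapses to identifying that slope.

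To compute the slope I would differentiate term by term on the open cell. Since $\frac{d}{dx}\,2^{-j}\sigma(2^jx)=\sigma'(2^jx)=(-1)^{c_{j+1}}$, where $c_{j+1}$ is the leading binary digit of the fractional part of $2^jx$, i.e. the $(j+1)$-st binary digit common to all interior points of the cell (ascending where the digit is $0$, descending where it is $1$), the slope of $T_m$ equals $\sum_{j=0}^{m-1}(-1)^{c_{j+1}}$. The key combinatorial observation is that every interior point of $(r_n^{(d)},r_n^{(d)}+2^{-m})$ has first $m$ binary digits $d_1,\ldots,d_n,0,\ldots,0$: indeed $\lfloor 2^m x\rfloor=2^mr_n^{(d)}$ there, and this integer is $d_1\cdots d_n$ followed by $m-n$ zeros in binary. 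Hence $\sum_{j=0}^{m-1}(-1)^{c_{j+1}}=\sum_{k=1}^n(-1)^{d_k}+(m-n)=(n-2u_n^{(d)})+(m-n)=m-2u_n^{(d)}$, and multiplying by the cell length $2^{-m}$ yields the claim.

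The main obstacle is bookkeeping rather than depth. A naive approach that expands $\sigma\big(2^j(r_n^{(d)}+2^{-m})\big)-\sigma\big(2^jr_n^{(d)}\big)$ directly runs into carry propagation when $d_n=1$ (most visibly in the boundary case $m=n$), which makes the signs awkward to track. The slope and piecewise-linear viewpoint sidesteps this entirely, since one only needs the carry-free digit pattern in the interior of the cell. The two points that must be got right are the sign convention $\sigma'=(-1)^{\text{digit}}$ and the vanishing of the tail; both follow immediately from the dyadic structure.
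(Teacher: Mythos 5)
Your proof is correct. Note that the paper itself offers no proof of this lemma: it is stated with a citation to the Allaart--Kawamura survey (pp.~19--20), so your argument supplies what the paper delegates to the literature, and it does so along the standard lines one finds there. The three pillars all check out: the tail $\sum_{j\geqslant m}2^{-j}\sigma(2^j\,\cdot\,)$ vanishes at both endpoints because $r_n^{(d)}$ and $r_n^{(d)}+2^{-m}$ are multiples of $2^{-m}$; the truncation $T_m$ is affine on the dyadic cell $[r_n^{(d)},r_n^{(d)}+2^{-m}]$ since every breakpoint of every summand with $j\leqslant m-1$ lies in $2^{-m}\Z$; and the slope on that cell is $\sum_{j=0}^{m-1}(-1)^{c_{j+1}}$ with digit string $d_1,\ldots,d_n,0,\ldots,0$, giving $(n-2u_n^{(d)})+(m-n)=m-2u_n^{(d)}$. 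Your remark about working in the open interior of the cell is exactly the right move: it is what makes the digit pattern carry-free and renders the boundary case $m=n$ (where $d_n=1$ would otherwise force carry propagation in a direct expansion of $\sigma(2^j(r_n^{(d)}+2^{-m}))-\sigma(2^jr_n^{(d)})$) unproblematic. I spot-checked the identity in the edge cases $n=0$ and $m=n$ and it holds.
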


\begin{figure}[ht]
\begin{center}
\includegraphics[width=0.7\textwidth]{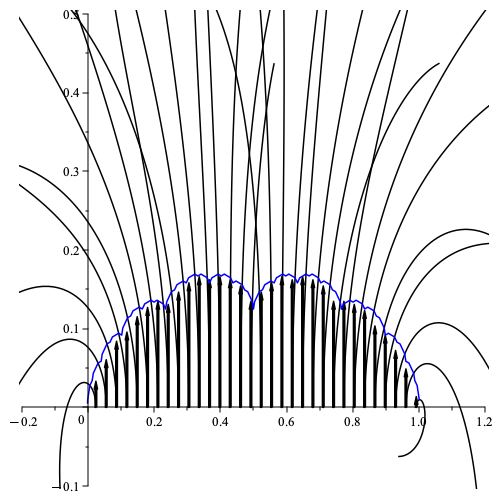}
\caption{The parametric curves $Z_n^{(d)}(\beta)$ of (\ref{eq:Z_n}) 
in the complex plane, for all binary sequences of length $n=2^6$, 
and $\beta\in[0,1/4]$. The arrow tangent to each curve at 
$\alpha=Z_n^{(d)}(0)$ represents the vector field $\bv(\alpha)$ scaled 
by the value $\beta=1/8$, which is a linear approximation
of the attractor $\mathcal{A}(1/8)$ (the transversal curve).
Intersections of parametric curves correspond to the appearance 
of points with multiple addresses in the attractor.
}\label{fig:VectorField}
\end{center}
\end{figure}

The connection between B\'ezier curves and the Takagi function first appears
in the following result.

\begin{lemma}\label{lma:v}
Let $Z^{(d)}(\beta)$ be as in (\ref{eq:Z}). Then, for every $\alpha\in[0,1]$:
\begin{equation}\label{eq:v}
\mathbf{v}(\alpha):=\left.
    \frac{\mathrm{d}Z^{(\pi^{-1}(\alpha))}(\beta)}{\mathrm{d}\beta}
            \right\vert_{0}=2\ri\rT(\alpha),
\end{equation}
where $\pi$ is defined in (\ref{eq:pi}) and $\rT$ is the Takagi function.
\end{lemma}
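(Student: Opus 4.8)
The plan is to reduce the vector field $\bv(\alpha)$ to the single Taylor coefficient of $Z^{(d)}$ that is linear in $\beta$, and then to identify that coefficient with a telescoping expansion of the Takagi function. Write $d=\pi^{-1}(\alpha)$, so that $\alpha=x^{(d)}$. By lemma \ref{lma:Convergence} the polynomials $Z_n^{(d)}$ converge coefficientwise, and the limit $Z^{(d)}(\beta)=\sum_{k\geqslant 0}a_k^{(d)}(\ri\beta)^k$ is a genuine power series in $\ri\beta$, absolutely convergent for $|\beta|<1/2$ (in particular about $\beta=0$, since the bound $|a_k^{(d)}|<2^{k+1}$ forces radius of convergence at least $1/2$). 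Term-by-term differentiation is therefore legitimate, and since every monomial $(\ri\beta)^k$ with $k\geqslant 2$ has vanishing $\beta$-derivative at $0$, only the linear term survives: $\bv(\alpha)=\left.\frac{\rd}{\rd\beta}Z^{(d)}(\beta)\right|_0=\ri\,a_1^{(d)}$, where $a_1^{(d)}=\lim_{n\to\infty}a_{1,n}^{(d)}$. Since all $a_{k,n}^{(d)}$ are real, the lemma is equivalent to the real identity $a_1^{(d)}=2\,\rT(x^{(d)})$.

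Next I would compute $a_{1,n}^{(d)}$ explicitly from the recursion of lemma \ref{lma:Zpoly}. Because $Z_0^{(d)}=0$ we have $a_{1,0}^{(d)}=0$, and the recursion gives $a_{1,n+1}^{(d)}-a_{1,n}^{(d)}=d_{n+1}\,c_{1,n}$, where $c_{1,n}$ is the coefficient of $(\ri\beta)^1$ in $W_n^{(d)}$, namely $w_{1,n}/\ri$. Evaluating the binomial sum in (\ref{eq:W}) at $m=1$ collapses it to two terms and yields $w_{1,n}=\ri\,(n+1-2u_n^{(d)})/2^{n}$, so that $a_{1,n+1}^{(d)}-a_{1,n}^{(d)}=d_{n+1}(n+1-2u_n^{(d)})/2^{n}$. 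Telescoping (and reindexing $\ell=n+1$) produces
$$
a_{1,n}^{(d)}=\sum_{\ell=1}^{n}d_\ell\,\frac{\ell-2u_{\ell-1}^{(d)}}{2^{\ell-1}}.
$$

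The crucial step is to recognise the right-hand side as twice a partial Takagi sum. Consider the partial binary sums $r_\ell^{(d)}$ of (\ref{eq:ru}). When $d_\ell=0$ we have $r_\ell^{(d)}=r_{\ell-1}^{(d)}$, and when $d_\ell=1$ we have $r_\ell^{(d)}=r_{\ell-1}^{(d)}+2^{-\ell}$; applying lemma \ref{lma:Tak1} with $n=\ell-1$ and $m=\ell$ (the hypothesis $m\geqslant n$ holds) gives in either case $\rT(r_\ell^{(d)})-\rT(r_{\ell-1}^{(d)})=d_\ell(\ell-2u_{\ell-1}^{(d)})/2^{\ell}$. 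Telescoping from $\rT(r_0^{(d)})=\rT(0)=0$ then yields $\rT(r_n^{(d)})=\tfrac12 a_{1,n}^{(d)}$, i.e.\ $a_{1,n}^{(d)}=2\,\rT(r_n^{(d)})$. Finally, letting $n\to\infty$ and using the continuity of $\rT$ together with $r_n^{(d)}\to x^{(d)}$ from (\ref{eq:limit}) gives $a_1^{(d)}=2\,\rT(x^{(d)})=2\,\rT(\alpha)$, whence $\bv(\alpha)=2\ri\,\rT(\alpha)$, as claimed.

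The routine parts here are the differentiation and the $m=1$ binomial evaluation; the real content ---and the step I expect to require the most care--- is the exact alignment of the two telescopes, ensuring that the per-digit increment of the linear coefficient matches (up to the factor $2$) the per-digit increment of the Takagi function supplied by lemma \ref{lma:Tak1}. Everything hinges on checking that the index shift $n\mapsto\ell-1$, $m\mapsto\ell$ is precisely the one for which the Takagi increment formula is valid, and that the boundary term $\rT(0)=0$ matches $a_{1,0}^{(d)}=0$.
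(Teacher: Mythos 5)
Your proof is correct and follows essentially the same route as the paper: reduce $\bv(\alpha)$ to the linear coefficient $a_1^{(d)}$, extract the per-digit increment $(n+1-2u_n)/2^n$ from the $m=1$ case of lemma \ref{lma:Zpoly}, and match it against the increment of $\rT$ supplied by lemma \ref{lma:Tak1} with $m=n+1$ (the paper phrases this as an induction $a_{1,n}=2\rT(a_{0,n})$, you as a pair of telescoping sums, which is the same computation). Your version is marginally leaner in that it works directly with $r_n^{(d)}$ and so skips the paper's doubling-map identification of $a_{0,n}$ with $r_n^{(d)}$, and it is more careful about justifying the term-by-term differentiation via the radius-of-convergence bound from lemma \ref{lma:Convergence}.
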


\begin{proof}
For any $n\geqslant 1$, from equation (\ref{eq:Zpoly}) we have
$$
\left.\frac{\mathrm{d}Z_n^{(d)}(\beta)}{\mathrm{d}\beta}
            \right\vert_{0}=\ri a_{1,n}^{(d)},
$$
so we have to compute the limits $a_0^{(d)}$ and $a_1^{(d)}$ as in 
(\ref{eq:Convergence}).
We determine the first coefficient $a_{0,n}$ of the 
polynomial $Z_n^{(d)}(\beta)$.

If $\beta=0$, the IFS (\ref{eq:f}) is represented as the pair
of planar maps
\begin{equation}\label{eq;F0}
F_0(x,y)=\bigl(\frac{1}{2}x\,,\, \frac{1}{2} y\bigr)
\hskip 30pt
F_1(x,y)=\bigl(\frac{1}{2}x+\frac{1}{2}\,,\, \frac{1}{2} y\bigr).
\end{equation}
The attractor $\mathcal{A}(0)$ is the unit interval, over which the
map is seen to be the inverse of the doubling map $x\mapsto 2x\mod{1}$.
Let $d(n)=(d_1,d_2,\ldots,d_n)$ be the sequence of
the first $n$ digits of $d$, and
consider expression (\ref{eq:f_n}) for the finite 
sequence $\revd(n)$. Since $\beta=0$, the final point 
will be $Z_n^{(d)}(0)=a_{0,n}$, where $a_{0,n}$ 
is a rational number. 

Consider now the orbit of the point $z=0$ under the sequence of maps
$f^{(k)}$ determined according to the expression (\ref{eq:f^d}).
This sequence is $\revd(n)$, and the final point is 
$Z_n^{(d)}(0)=a_{n,0}^{(d)}$, from (\ref{eq:f_n}).
This point is the initial point of an orbit of the inverse map
---the doubling map-- which corresponds to the reverse sequence $d(n)$.
It is well-known that the sequence $d(n)$ is the sequence of binary 
digits of the initial point, and therefore we have
\begin{equation}\label{eq:a_0}
a_{0,n}=r_n^{(d)}=\sum_{k=1}^{n}\frac{d_{k}}{2^k}\, \in\, \Z[1/2].
\end{equation}
In other words, the dyadic rational $r_n^{(d)}$ is the initial point of an 
orbit of the doubling map, which terminates at the fixed point $0$ 
in $n$ steps. 
Taking the limit as $n\to\infty$, from (\ref{eq:limit}) we find 
that $a_0^{(d)}=x^{(d)}$.

Next we determine $a_{1,n}$, using induction on $n$. For $n=1$ we have
$f^{(0)}(0)=0$ and $f^{(1)}(0)=1/2+\mathrm{i}\beta = a_{0,1} + \mathrm{i}\beta a_{1,1}$, 
so in either case we have $a_{1,1}=2\rT(a_{0,1})$. Assume now that this
is true for all binary sequences of length $n$: $a_{1,n}=2\rT(a_{0,n})$. 

We consider the successive digit $d_{n+1}$. If $d_{n+1}=0$ we have
$a_{1,n+1}=a_{1,n}$ from (\ref{eq:d=0}), and there is nothing to prove.
If $d_{n+1}=1$, then from (\ref{eq:Zrecursion}) and (\ref{eq:W}) with
$m=1$ we find
\begin{equation}\label{eq:bn+1}
a_{1,n+1}=-\mathrm{i}w_{1,n}+a_{1,n}.
\end{equation}
We compute
\begin{equation*}
-\mathrm{i}w_{1,n}=\frac{1}{2^n} [(n+1-u_n)-u_n]=\frac{n+1-2u_n}{2^n}.
\end{equation*}
From the above, equation (\ref{eq:bn+1}), the induction hypothesis, 
and lemma \ref{lma:Tak1} with $m=n+1$, we have
\begin{equation*}
a_{1,n+1}=a_{1,n}+\frac{n+1-2u_n}{2^n}
 = 2\rT(a_{0,n})+\frac{n+1-2u_n}{2^n}
 = 2\rT\bigl(a_{0,n}+\frac{1}{2^{n+1}}\bigr),
\end{equation*}
which completes the induction.
Taking the limit, we have, for any $d$
$$
\lim_{n\to\infty} a_{1,n+1}^{(d)}=\lim_{n\to\infty}2\rT
  \bigl(a_{0,n}^{(d)}+\frac{d_{n+1}}{2^{n+1}}\bigr)
=2\rT(\lim_{n\to\infty} a_{0,n+1}^{(d)})=2\rT(x^{(d)}),
$$
the penultimate step being justified by the continuity of $\rT$.
The proof is complete.
\end{proof}

From lemmas \ref{lma:Convergence} and \ref{lma:v}
we obtain an infinite sequence of functions
\begin{equation}\label{eq:a_k}
a_k:[0,1]\to \R \qquad \alpha\mapsto a_k^{(\pi^{-1}(\alpha))}
\qquad k=0,1,\ldots,
\end{equation}
with $a_0(\alpha)=\alpha$, $a_1(\alpha)=2\rT(\alpha)$, and $a_k$ displaying an
increasing degree of irregularity as $k$ increases (see figure
\ref{fig:Coefficients}).

\begin{figure}[ht]
\begin{center}
\includegraphics[width=0.6\textwidth]{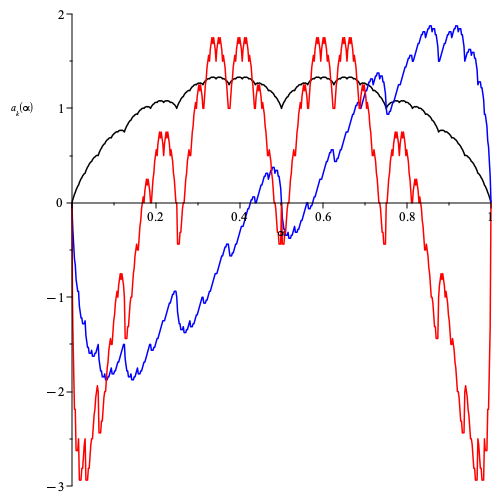}
\caption{The graphs of the functions $\alpha\mapsto a_k(\alpha)$ of (\ref{eq:a_k}), 
for $k=1$ (black), $k=2$ (blue) and $k=3$ (red).
The function $a_1$ is the scaled Takagi function ---cf.~lemma \ref{lma:v}.
}\label{fig:Coefficients}
\end{center}
\end{figure}

The quantity $\bv$ is the simplest instance of a non-smooth vector 
field on a smooth B\'ezier curve $\cB=\mathcal{A}(0)$ 
(see figure \ref{fig:VectorField}). 
The field $\bv$ is orthogonal to the curve, and arises from the 
complexification of the curve's parameter: $t=1/2+\ri\beta$. 
In this context the function $\rg$ performs scaling in the
direction of the vector field.

The set 
$$
\bigcup_\alpha \bigl(\alpha + \bv(\alpha)\beta\bigr),
$$
represents a linear approximation of $\cA(\beta)$ 
(see figure \ref{fig:VectorField}), which improves as $\beta\to 0$. 
To express this fact more precisely, we scale $\cA$ in 
the direction of the field. This result is the content of the following
theorem.

\begin{theorem}\label{thm:Takagi}
Let $\mathcal{A}(\beta) \in \mathbb{C}$ be the attractor of the 
de Casteljau IFS (\ref{eq:f}) for the B\'ezier curve with control 
points $0$ and $1$ and complex subdivision parameter $t=\frac{1}{2}+i\beta$,
and let $\rg$ be the scaling map (\ref{eq:g}).
Then 
\begin{equation}\label{eq:TakagiTheorem}
\lim_{\beta\to 0} \rd_H(\rg\big(\mathcal{A}(\beta),(2\beta)^{-1}\big),\cT)=0,
\end{equation}
where $\cT$ is the graph of the Takagi function and $\rd_H$ is the
Hausdorff distance (\ref{eq:HausDistance}).
\end{theorem}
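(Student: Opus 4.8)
The plan is to realise the attractor through its address (coding) map and thereby reduce the Hausdorff statement to a single uniform estimate. Since $0$ is the fixed point of $f^{(0)}$ and both maps in (\ref{eq:f}) are contractions for $|\beta|<\sqrt3/2$, standard IFS theory identifies $\cA(\beta)$ with the image of the continuous coding map
\begin{equation*}
\Omega\longrightarrow\C\hskip 30pt d\longmapsto Z^{(d)}(\beta)=\lim_{n\to\infty}f_n^{(d)}(0),
\end{equation*}
so $\cA(\beta)=\{Z^{(d)}(\beta)\,|\,d\in\Omega\}$; applying the ($\R$-linear, hence continuous) map $\rg(\,\cdot\,,(2\beta)^{-1})$ gives $\rg(\cA(\beta),(2\beta)^{-1})=\{\rg(Z^{(d)}(\beta),(2\beta)^{-1})\,|\,d\in\Omega\}$, a compact set. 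I would fix $|\beta|<1/2$ and prove a single bound: every scaled orbit point lies within some $\epsilon(\beta)$ of the point $x^{(d)}+\ri\,\rT(x^{(d)})\in\cT$, with $\epsilon(\beta)\to0$ uniformly in $d$. Both one-sided distances in (\ref{eq:dist2}) are then controlled by $\epsilon(\beta)$, which yields (\ref{eq:TakagiTheorem}).

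For the computation I would use the coefficientwise-convergent expansion $Z^{(d)}(\beta)=\sum_{k\geqslant0}a_k^{(d)}(\ri\beta)^k$ of lemma \ref{lma:Convergence}, grouping even and odd powers into the real and imaginary parts. Writing $\alpha=x^{(d)}$ and inserting $a_0^{(d)}=\alpha$ together with $a_1^{(d)}=2\rT(\alpha)$ from the proof of lemma \ref{lma:v}, this reads
\begin{equation*}
\Re Z^{(d)}(\beta)=\alpha+\sum_{j\geqslant1}(-1)^j a_{2j}^{(d)}\beta^{2j},
\hskip 20pt
\Im Z^{(d)}(\beta)=2\rT(\alpha)\beta+\sum_{j\geqslant1}(-1)^j a_{2j+1}^{(d)}\beta^{2j+1}.
\end{equation*}
The scaling $\rg(\,\cdot\,,(2\beta)^{-1})$ divides the imaginary part by $2\beta$, which is exactly the factor turning the leading term $2\rT(\alpha)\beta$ into $\rT(\alpha)$. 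Hence
\begin{equation*}
\rg(Z^{(d)}(\beta),(2\beta)^{-1})-\bigl(\alpha+\ri\,\rT(\alpha)\bigr)
=\sum_{j\geqslant1}(-1)^j a_{2j}^{(d)}\beta^{2j}+\frac{\ri}{2}\sum_{j\geqslant1}(-1)^j a_{2j+1}^{(d)}\beta^{2j},
\end{equation*}
so the entire error consists of terms of order $\beta^2$ and higher.

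The step I expect to be the main obstacle is showing this error is small \emph{uniformly} in $d$; everything else is mechanical once lemmas \ref{lma:Convergence} and \ref{lma:v} are in hand. Here I would invoke the $d$-independent bound $|a_k^{(d)}|<2^{k+1}$ of lemma \ref{lma:Convergence}, which dominates each tail by a geometric series in $4\beta^2$:
\begin{equation*}
\bigl|\rg(Z^{(d)}(\beta),(2\beta)^{-1})-(\alpha+\ri\,\rT(\alpha))\bigr|
\leqslant\sum_{j\geqslant1}|a_{2j}^{(d)}|\beta^{2j}+\frac12\sum_{j\geqslant1}|a_{2j+1}^{(d)}|\beta^{2j}
<4\sum_{j\geqslant1}(4\beta^2)^j=\frac{16\beta^2}{1-4\beta^2}=:\epsilon(\beta).
\end{equation*}
For $|\beta|<1/2$ this is finite, and $\epsilon(\beta)\to0$ as $\beta\to0$, independently of $d\in\Omega$.

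With the uniform bound secured, both Hausdorff inclusions follow immediately. Any point of the scaled attractor equals $\rg(Z^{(d)}(\beta),(2\beta)^{-1})$ for some $d$, and lies within $\epsilon(\beta)$ of $x^{(d)}+\ri\,\rT(x^{(d)})\in\cT$ since $x^{(d)}\in[0,1]$; conversely, any $\alpha+\ri\,\rT(\alpha)\in\cT$ is within $\epsilon(\beta)$ of the scaled orbit point with canonical address $d=\pi^{-1}(\alpha)$. Therefore $\rd_H(\rg(\cA(\beta),(2\beta)^{-1}),\cT)\leqslant\epsilon(\beta)\to0$, which is (\ref{eq:TakagiTheorem}). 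The one point I would be careful to verify is that the identities $a_0^{(d)}=x^{(d)}$ and $a_1^{(d)}=2\rT(x^{(d)})$ hold for \emph{every} address, including the non-canonical dyadic ones present in the attractor; this is so, because the induction in the proof of lemma \ref{lma:v} never uses the canonicality of $d$.
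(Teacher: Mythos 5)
Your proposal is correct and takes essentially the same route as the paper's own (first) proof: both represent the attractor through the address map $d\mapsto Z^{(d)}(\beta)$, insert $a_0^{(d)}=x^{(d)}$ and $a_1^{(d)}=2\rT(x^{(d)})$ from lemma \ref{lma:v}, and control the remaining tail uniformly in $d$ via the bound $|a_k^{(d)}|<2^{k+1}$ of lemma \ref{lma:Convergence}, summing a geometric series in $4\beta^2$. The only differences are cosmetic (your error bound $16\beta^2/(1-4\beta^2)$ versus the paper's $4\beta/(1-4\beta^2)$, and your explicit remark about non-canonical dyadic addresses); the paper additionally offers a second, independent proof via density of orbits in the attractor, which you do not need.
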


\begin{proof}
We shall provide two proofs.
\smallskip

From lemma \ref{lma:Convergence}, we can represent the attractor of
the IFS as
$$
\cA(\beta)=\bigcup_{d\in\Omega} Z^{(d)}(\beta)
=\bigcup_{\alpha\in[0,1]} Z^{(\pi^{-1}(\alpha))}(\beta)\qquad |\beta|<1/2.
$$
Let $\cA^*(\beta)=\rg(\cA(\beta),(2\beta)^{-1})$, and 
write $Z(\alpha,\beta)$ for $Z^{(\pi^{-1}(\alpha)}(\beta)$.
We have
\begin{eqnarray*}
\cA^*(\beta)&=&\bigcup_\alpha \rg(Z(\alpha,\beta),(2\beta)^{-1})
=\bigcup_\alpha \alpha + \rg(Z(\alpha,\beta)-\alpha,(2\beta)^{-1})\\
&=&\bigcup_\alpha \alpha + \frac{1}{2}\bv(\alpha) + O_\alpha(\beta)\\
&=&\cT + \bigcup_{\alpha}O_\alpha(\beta),
\end{eqnarray*}
where $|O_\alpha(\beta)|\leqslant C\beta$, for some constant
$C=C(\alpha)=C_x(\alpha)+\ri C_y(\alpha)$. (Here `$+$' represents
the Minkowski sum of sets.)
We have
$$
\rd_H(\cA^*(\beta),\cT)\leqslant \sup_{\alpha}C(\alpha)\beta.
$$
Using \ref{eq:Zpoly} and (\ref{eq:Convergence}) we estimate:
\begin{eqnarray*}
|C(\alpha)|&\leqslant&|C_x(\alpha)|+|C_y(\alpha)|\leqslant\sum_{k\geqslant 1}
    \bigl(|a_{2k}|+\frac{1}{2}|a_{2k+1}|\bigr)\beta^{2k-2}\\
&\leqslant&\sum_{k\geqslant 1}(2^{2k+1}+2^{2k+1})\beta^{2k-2}\\
&=&4\sum_{k\geqslant 0}(4\beta^2)^k=\frac{4}{1-4\beta^2},
    \qquad |\beta|<\frac{1}{2}.
\end{eqnarray*}
Thus, for $|\beta|<1/2$ we have
$$
\rd_H(\cA^*(\beta),\cT)\leqslant\frac{4\beta}{1-4\beta^2}\to 0,
$$
as claimed.
\bigskip

For an alternative proof, we let $\beta\in(0,\sqrt{3}/2)$, in which range
the IFS $\{f^{(0)},f^{(1)}\}$ is hyperbolic 
[see comment after eq.~(\ref{eq:f})]. 
For any $d\in\Omega$, we define the 
following sequence of complex numbers:
\begin{equation}\label{eq:frev}
\revZ_0^{(d)}(\beta)=0\qquad
\revZ_n^{(d)}(\beta)=
  f^{(d_n)}\circ f^{(d_{n-1})}\circ\cdots\circ f^{(d_1)}(0),\quad n\geqslant 1.
\end{equation}
[Note that in general $\revZ_n^{(d)}\not=Z_n^{(\revd)}$, cf.~(\ref{eq:revd}).]
Since $0$ belongs to $\mathcal{A}(\beta)$ for any $\beta$, so do all the 
$\revZ_n^{(d)}$ and $Z_n^{(d)}$. It follows that the two sets 
\begin{equation*}\label{eq:Gamma}
\Gamma_n(\beta)=\bigcup_{d\in\Omega}Z_n^{(d)}(\beta)
\hskip 30pt
\revGamma_n^{(d)}(\beta)=\bigcup_{k=0}^n \revZ_k^{(d)}(\beta)
\end{equation*}
are finite subsets of $\mathcal{A}(\beta)$.
Since all binary sequences of length $n$ are represented in $\Gamma_n$,
for any $d$, $n$, and $\beta$ in the range specified above we obtain the 
chain of inclusions
\begin{equation}\label{eq:Inclusions}
\revGamma_n^{(d)}(\beta)\subset\Gamma_n(\beta)\subset \mathcal{A}(\beta).
\end{equation}
From the hyperbolicity of the IFS and 
\citep[theorem 2, p.~365]{FractalsEverywhere} it then follows
that for almost all $d\in\Omega$ [with respect to the Lebesgue
measure, see comment following eq.~(\ref{eq:pi})]) we have
\begin{equation*}
\overline{\lim_{n\to\infty} \revGamma_n^{(d)}(\beta)}=\mathcal{A}(\beta),
\end{equation*}
where the overbar denotes the closure.
From (\ref{eq:Inclusions}) we then have
\begin{equation}\label{eq:Closure}
\overline{\lim_{n\to\infty} \Gamma_n(\beta)}=\mathcal{A}(\beta)
  \qquad  \beta\in (0,\sqrt{3}/2).
\end{equation}

From lemma \ref{lma:v}, we have 
\begin{equation}\label{eq:O}
\rg(Z_n^{(d)}(\beta),(2\beta)^{-1})
    =r_n^{(d)}+\mathrm{i}\rT(r_n^{(d)})+O(\beta^2),\qquad \beta\to 0,
\end{equation}
where the constant in $O(\beta^2)$ depends on $d$ and $n$.
We obtain
$$
\lim_{\beta\to 0} \mathrm{d}(\rg \big(\Gamma_n(\beta),(2\beta)^{-1}\big),\cT)=0,
$$
where the (asymmetric) distance $\mathrm{d}$ is defined in (\ref{eq:dist2}).
Taking the limit as $n\to\infty$ gives a set of points of $\mathcal{A}^*$
which are dense in $\cT$. Since $\mathcal{A}^*$ is closed, their closure $\cT$
is contained in $\mathcal{A}^*$.
Our result now follows from the chain of inclusions (\ref{eq:Inclusions}).
\end{proof}

\begin{remark}
Theorem \ref{thm:Takagi} does not imply that $\mathcal{A}^*$ 
(which is connected from theorem \ref{thm:IFSAttractor})
is topologically equivalent to $\cT$ for all sufficiently small $\beta$.
Indeed while there is a unique parametric curve $Z^{(d)}(\beta)$ through each point 
of $\cB=\cA(0)$, the union of all curves does not constitute a fiber bundle 
over $\cB$ because the intersections of curves visible in figure 
\ref{fig:VectorField} can be shown to occur arbitrarily close to $\cB$,
for instance, near $x=1/2$.
\end{remark}

\medskip
In closing, we show (informally) that the Takagi function also appears 
in the general case of a B\'ezier curve of degree $m\geqslant 2$, 
as one component of the vector field. 
We use the lower-triangular representation (\ref{eq:ltM}) 
of the de Casteljau matrices.
Let $\{f^{(0)},f^{(1)}\}$ be the corresponding
affine IFS on $\C^m$, and let $\mathcal{\bar A}(\beta)$ 
be the attractor for $t=1/2+\mathrm{i}\beta$.
For any sequence $d\in\Omega$ we form the matrix product 
$\mathbf{M}_n(d)$ as in (\ref{eq:M_n}).
Then the vector
\begin{equation}\label{eq:Zvector}
 \left(\begin{array}{c} 1\\
                        Z_{n,1}^{(d)}\\
                          \vdots\\
                         Z_{n,m}^{(d)}
\end{array}\right)
=\mathbf{M}
 \left(\begin{array}{c} 1\\
                        0\\
                        \vdots\\
                        0
\end{array}\right),
\end{equation}
whose entries are polynomials in $\Z[1/2,\ri\beta]$, belongs to 
$\mathcal{\bar A}(\beta)$ for any $\beta$, since so does the 
vector $(1,0^m)^\top$, being the fixed point 
of $f_0$ in homogeneous coordinates. 

Consider now the first component of this vector.
From the lower triangular form of $\mathbf{M}^{(0)}$ and $\mathbf{M}^{(1)}$ 
it is straighforward to show that its evolution is determined by the 
pair of maps 
\begin{equation}\label{eq:f_m}
f^{(0,m)}(x)=tx\qquad 
 f^{(1,m)}(x)=(1-t)x+mt\hskip 30pt t=\frac{1}{2}+\mathrm{i}\beta.
\end{equation}
This system is conjugate to the system (\ref{eq:f}) by the bijection
$x\mapsto mx$. 
Hence for $\beta=0$ the smooth B\'ezier curve $\cB_{1}(0)$ is the interval 
with end-points 0 and $m$, and the map $\pi_{m}$ which sends the
symbolic binary address $d$ to the corresponding point on 
$\cB_1$ (namely the constant coefficient of $Z_{n,1}^{(d)}$), is given 
by $\pi_{m}=m\pi$, where $\pi$ is defined in (\ref{eq:pi}).
In the limit as $n\to\infty$ the first (affine) component of the vector 
(\ref{eq:Zvector}) is equal to $mZ^{(d)}$, and hence the 
corresponding component of the vector field is given by
\begin{equation}\label{eq:v_m}
x=\pi_{m}(d)=m\sum_{k\geqslant 1}\frac{d_k}{2^k},
\hskip 30pt
\bv_{m}(x)=\left.
    \frac{\mathrm{d}Z^{(\pi_{m}^{-1}(x))}(\beta)}{\mathrm{d}\beta}
            \right\vert_{0} =2m\ri\rT(x/m),
\end{equation}
where $\rT$ is the Takagi function. 
Defining the scaling function as
$$
\rg(z,\alpha,\beta)=\alpha\Re(z)+ \beta\,\ri\,\Im(z)
$$
the limit (\ref{eq:TakagiTheorem}) becomes
\begin{equation*}
\lim_{\beta\to 0} 
  \rd_H(\rg\big(\mathcal{A}(\beta),1/m,(2m\beta)^{-1}\big),\cT)=0,
\end{equation*}
which is analogous to the case of two control points.


\section*{Declaration of competing interest}
The authors declare that they have no competing interests.

\section*{Acknowledgement}
L. Pt\'a\v{c}kov\'a was supported by the Czech Science Foundation Grant P403-22-11117S.
The detailed comments of a referee helped improve the clarity and accuracy 
of this paper.


\end{document}